\newtheorem{theorem}{Theorem}[section]
\newtheorem{defi}[theorem]{Definition}
\newtheorem{example}[theorem]{Example}
\newtheorem{remark}[theorem]{Remark}
\numberwithin{equation}{section}
\newcommand{\re}{\mathbb{R}}
\newcommand{\F}{\mathbb{F}}
\newcommand{\N}{\mathbb{N}}
\newcommand{\half}{\frac{1}{2}}
\newcommand{\nn}{\nonumber}
\def\af{\alpha}
\def\rank{\mbox{rank}}
\newcommand{\reff}[1]{(\ref{#1})}
\renewcommand{\vec}[1]{\mbox{vec}(#1)}
\newcommand{\mc}[1]{\mathcal{#1}}
\newcommand{\supp}[1]{\mbox{supp}(#1)}
\newcommand{\st}{\mathrm{s.t.}}
\newcommand{\ideal}[1]{\mathit{Ideal}[#1]}
\newcommand{\qmod}[1]{\mathit{QM}[#1]}
\newcommand{\mt}[1]{\mathtt{#1}}
\newcommand{\mA}{\mathcal{A}}
\newcommand{\mB}{\mathcal{B}}
\newcommand{\bdes}{\begin{description}}
\newcommand{\edes}{\end{description}}
\newcommand{\bal}{\begin{align}}
\newcommand{\eal}{\end{align}}
\newcommand{\bnum}{\begin{enumerate}}
\newcommand{\enum}{\end{enumerate}}
\newcommand{\bit}{\begin{itemize}}
\newcommand{\eit}{\end{itemize}}
\newcommand{\bea}{\begin{eqnarray}}
\newcommand{\eea}{\end{eqnarray}}
\newcommand{\be}{\begin{equation}}
\newcommand{\ee}{\end{equation}}
\newcommand{\baray}{\begin{array}}
\newcommand{\earay}{\end{array}}
\newcommand{\bsry}{\begin{subarray}}
\newcommand{\esry}{\end{subarray}}
\newcommand{\bca}{\begin{cases}}
\newcommand{\eca}{\end{cases}}
\newcommand{\bcen}{\begin{center}}
\newcommand{\ecen}{\end{center}}
\newcommand{\bbm}{\begin{bmatrix}}
\newcommand{\ebm}{\end{bmatrix}}
\newcommand{\bmx}{\begin{matrix}}
\newcommand{\emx}{\end{matrix}}
\newcommand{\bpm}{\begin{pmatrix}}
\newcommand{\epm}{\end{pmatrix}}
\newcommand{\bvm}{\begin{vmatrix}}
\newcommand{\evm}{\end{vmatrix}}
\newcommand{\btab}{\begin{tabular}}
\newcommand{\etab}{\end{tabular}}
\begin{document}

\title{The Rank-$1$ Completion Problem for Cubic Tensors}

\author[Jinling~Zhou]{Jinling Zhou}
\author[Jiawang~Nie]{Jiawang Nie}
\author[Zheng~Peng]{Zheng Peng}
\author[Guangming~Zhou]{Guangming Zhou}

\address{
Jinling~Zhou, Zheng Peng, Guangming Zhou,
School of Mathematics and Computational Science,
Xiangtan University, Xiangtan, Hunan, 411105, China.}
\email{jinlingzhou@smail.xtu.edu.cn,
pzheng@xtu.edu.cn,
zhougm@xtu.edu.cn}

\address{Jiawang Nie, Department of Mathematics,
University of California San Diego,
9500 Gilman Drive, La Jolla, CA, USA, 92093.}
\email{njw@math.ucsd.edu}

\date{}

% REQUIRED
\begin{abstract}
This paper studies the rank-$1$ tensor completion problem
for cubic tensors.
First of all, we show that this problem is equivalent
to a special rank-$1$ matrix recovery problem.
When the tensor is strongly rank-$1$ completable, we show that
the problem is equivalent to a rank-$1$ matrix completion problem
and it can be solved by an iterative formula. For other cases,
we propose both nuclear norm relaxation and moment relaxation methods
for solving the resulting rank-$1$ matrix recovery problem.
The nuclear norm relaxation sometimes returns a rank-$1$ tensor completion,
while sometimes it does not. When it fails,
we apply the moment hierarchy of semidefinite programming relaxations
to solve the rank-$1$ matrix recovery problem.
The moment hierarchy can always get a rank-$1$ tensor completion,
or detect its nonexistence.
%Therefore, much larger size problems can be solved efficiently
%for strongly rank-$1$ completable tensors.
Numerical experiments are shown to demonstrate
the efficiency of these proposed methods.
\end{abstract}

% REQUIRED

\keywords{tensor, completion, rank, matrix, moment}

% REQUIRED

\subjclass[2020]{15A69, 90C23, 65F99}

\maketitle

\section{Introduction}
\label{sec:introduction}

Let $\mathbb{F}$ be the real filed $\mathbb{R}$ or complex filed $\mathbb{C}$.
A tensor $\mathcal{A} \in \F^{n_1\times\cdots\times n_m}$
can be viewed as the multi-array indexed such that
\[
\mathcal{A}=(\mathcal{A}_{i_1\cdots i_m})_{
\substack{
 1 \le i_1 \le n_1, \ldots,
 1 \le i_m \le n_m  .
}  }
\]
The integer $m$ is called the $\mathnormal{order}$ of $\mathcal{A}$.
%Tensors of order $m$ are called $m$-tensors.
When $m=3$, we call $\mA$ a cubic tensor.
For vectors $u_k\in \F^{n_k}$, $k=1,\cdots ,m$,
the notation $u_1\otimes\cdots\otimes u_m$
denotes the tensor in $\F^{n_1\times\cdots\times n_m}$ such that
\[
( u_1\otimes \cdots \otimes u_m   )_{i_1\cdots i_m} =
( u_1 )_{i_1} \cdots  ( u_m )_{i_m} ,
\]
for all indices $i_1,\cdots,i_m$. Tensors of the form
$
u_1\otimes \cdots \otimes u_m
$
are called rank-$1$ tensors. When $n_1=\cdots =n_m = n$,
tensors in $\mathbb{F}^{n_1\times\cdots\times n_m}$
are said to be {\it hypercubical}. We denote the hypercubical tensor space
\[
\mathtt{T}^m(\F^n) \,  \coloneqq \,  \F^n\otimes \cdots \otimes \F^n
\qquad \text{($\F^n$ appears $m$ times)}.
\]
When it is hypercubical, the tensor $\mathcal{A}$
is said to be symmetric if
$
\mathcal{A}_{i_1\cdots i_m}=\mathcal{A}_{j_1\cdots j_m}
$
for every permutation $(i_1\cdots i_m)$ of $(j_1\cdots j_m)$.
The subspace of symmetric tensors in $\mathtt{T}^m(\F^n)$
is denoted as $\mathtt{S}^m(\F^n)$.
The dimension of $\F^{n_1\times\cdots\times n_m}$ is $n_1 \cdots  n_m$,
while the dimension of $\mathtt{S}^m(\F^n)$ is $\binom{n+m-1}{m}$.

An important concept for tensors is rank.
For $\mathcal{A} \in \F^{n_1\times\cdots\times n_m}$,
its rank is the smallest $r$ such that
\[
\mc{A} = \mA_1 + \cdots + \mA_r,
\]
where each $\mA_i \in \F^{n_1\times\cdots\times n_m}$ is of rank $1$.
The above equation is also often referenced as
Candecomp-Parafac (CP) decomposition
and the smallest $r$ is referred as CP rank
in the literature \cite{hitchcock1927}.
%The rank of $\mA$ is denoted as $\rank \, \mA$.
%Determining tensor ranks is
%a computationally difficult computational task \cite{hillar2013}.
We refer to \cite{Breiding,carroll1970,LMV2004,Harshman,GPSTD,NWZ23,SBL13,TelenVan22}
for the work of computing tensor decompositions.
It is interesting to remark that the tensor rank
depends on the ground field \cite{Lim13}.
There also exist other types of tensor ranks, such as border rank
and multi-linear rank.
We refer to the work \cite{LMV2000,hitchcock1928,Land12,Lim13}
for various types of ranks for tensors.

%%%%%%%%%%%%%%%%%%%%%%%%%%%%%%%%%%%%%%%%%%%%%%%%%%%%%%%%%%%%%%%%%%%%%%%%%%%%%%%
\iffalse

For symmetric tensor decompositions, there exist methods
that are based on catalecticant matrices \cite{IaKa99},
Hankel matrices and flat extensions \cite{BCMT10},
optimization based methods~\cite{Kolda15},
tensor eigenvectors \cite{OedOtt13},
and generating polynomials~\cite{GPSTD}.
Moreover, the method in \cite{BCMT10} can be generalized
to compute nonsymmetric tensor decompositions~\cite{BBCM13}.
Tensor decompositions have broad applications \cite{KolBad09}.
For symmetric tensors, Waring decompositions have
broad applications, for instance, in machine learning \cite{AGHKT}.
For more introductions about tensors, we refer to \cite{Land12,Lim13}.

The above concept of is also referenced as CANDECOMP/PARAFAC
(CP) rank \cite{bro1997,carroll1970,Dong-Gao-21,guan2020,hitchcock1927}.
There also exist other types of tensor ranks, such as
Tucker rank\cite{gao2023lowrank,Tucker-66},
tensor train (TT) rank\cite{gao2024riemannian,Oseledets-11,Rauhut-15},
tensor tubal rank\cite{zhang2014} and multi-rank\cite{kilmer2011}.

\fi
%%%%%%%%%%%%%%%%%%%%%%%%%%%%%%%%%%%%%%%%%%%%%%%%%%%%%%%%%%%%%%%%%%%%

The tensor completion problem (TCP) is to find values for missing entries
of a partially given tensor so that it has certain property, e.g., it has low rank.
TCP has broad applications, such as
%%personalized web search\cite{sun2005},
%face recognition \cite{hao2013},
computer vision \cite{L-W-13,qin2022,qiu2021},
%graphics\cite{Ana-15}, image inpainting \cite{long2021,tan2014,zhao2020},
%collaborative filtering\cite{bengua2017,zheng2010collaborative},
recommendation systems \cite{frolov2017,Kara-10},
imaging and signal processing \cite{L-H-14,zhao2020}.
%medical image processing \cite{jiang2018},
%magnetic resonance imaging data recovery \cite{hu2016,varghees2012},
%internet traffic data recovery \cite{chen2019,xie2018,zhang2009},
%high-order web link analysis\cite{kolda2005}
%and data mining \cite{Cohen-12,papalexakis2016}.
We refer to \cite{KolBad09} for more applications of tensors.

Tensor completion is an extension of matrix completion.
The nuclear norm relaxation is frequently used to get matrix completions
\cite{cai2010,candes2012,recht2010}.
Nuclear norms are also defined for tensors \cite{FriLim18,Nie-nuclear17}.
Tensor nuclear norm relaxations can be extended
to solve tensor completion or recovery problems
\cite{MHWG,tang2015,tian2024tensor,YuaZha16}.
Optimization methods based on CP decompositions can be used to
get low rank tensor completions or recovery
\cite{Bai2016,qiu2021,zhao2020}.
Tucker decomposition based techniques are also applicable to get
tensor completions via minimizing ranks of unfolded matrices
\cite{L-W-13,rauhut2017}.
Recently, there are Riemannian-manifold optimization methods
for getting tensor completions
\cite{Dong-Gao-21,gao2024riemannian,K-S-V-14,Swijsen22}.
%
%Though there exist many methods to perform tensor completion \cite{Song-19},
%global optimal solutions are obtained mainly be
%convex low-rank tensors norms, making them an active area of research.
%\cite{L-W-13}.
%

Many tensor optimization problems can be formulated as
moment or polynomial optimization (see \cite[Chapter~11]{mpo}).
Tensor nuclear norms can be computed by solving
moment relaxations \cite{Nie-nuclear17,tang2015}.
%
%Tensor completion problem can be solved with high probability
%through a level-6 Moment-SOS relaxation\cite{barak2015}.
%
Real eigenvalues of tensors can be obtained by solving
certain polynomial optimization problems \cite{CDN14,NieZhang18}.
Stable rank-one matrix completions can be solved by
the level-$2$ Moment-SOS relaxation \cite{C-D-17}.

\subsection*{Contributions}

This paper focuses on rank-$1$ completions for cubic tensors.
It looks for a rank-$1$ tensor that has given values for some of its entries.
Consider a partially given tensor $\mA \in  \re^{n_1\times n_2\times n_3}$,
i.e., there is a subset
\[
\Omega  \, \subseteq  \,  [ n_1 ] \times [ n_2 ] \times [ n_3 ]
\]
such that the entry $\mA_{ijk}$ is given for all $(i,j,k) \in \Omega$.
The rank-$1$ tensor completion problem is to look for a rank-$1$ tensor
$a \otimes b \otimes c$ such that
$a_i b_j c_k = \mA_{ijk}$ for all $(i,j,k) \in \Omega$.
So, the problem can formulated as
\begin{equation}\label{eq-general-tc-r1}
\left\{ \baray{cl}
\text{find}  &  (a, b, c) \in  \re^{n_1} \times \re^{n_2} \times \re^{n_3}  \\
\st  &   \mA_{ijk} = a_i b_j c_k,   \, \,  (i,j,k) \in \Omega.
\earay \right.
\end{equation}
For the above $a,b,c$, the equation $\mA = a \otimes b \otimes c$
is called a rank-$1$ tensor completion for $\mA$.
%%%%%%%%%%%%%%%%%%%%%%%%%%%%%%%%%%%%%%%%%%%%%%%%%%%%%%%%%%%%%%%%%%%%%%%
\iffalse

In the deterministic
 setting we want to find tensor
 $a\otimes b\otimes c$ such that $\mathcal{A}_{ijk}=(a\otimes b\otimes c)_{ijk},
 \,(i,j,k)\in\Omega$. Recall that $(a\otimes b\otimes c)_{ijk}=a_ib_jc_k$,
 with $a\in \mathbb{R}^{n_1}$, $b\in \mathbb{R}^{n_2}$, $c\in \mathbb{R}^{n_3}$.
 That is we need to find vectors $a,\,b,\,c$ such that
\begin{equation}\label{2-1}
 \mathcal{A}_{ijk}=a_ib_jc_k \quad (i,j,k)\in \Omega.
\end{equation}
Then the rank-1 tensor completion problem with order 3 in the real filed form is
\begin{equation}\label{eq-tc-sec1}
\left\{
\begin{aligned}
find \quad & (a,\,b,\,c)\in\mathbb{R}^{n_1}\times \mathbb{R}^{n_2}\times \mathbb{R}^{n_3}\\
\st\quad & \mathcal{A}_{ijk}=a_ib_jc_k,\enspace (i,\,j,\,k)\in \Omega.  \\
\end{aligned}
\right.
\end{equation}

\fi
%%%%%%%%%%%%%%%%%%%%%%%%%%%%%%%%%%%%%%%%%%%%%%%%%%%%%%%%%%%%%%%%%%%%%%%
The geometric properties of the rank-$1$ tensor completion problem
are well studied by Kahle et al. \cite{TKMZ}.
%
%Kahle, Kubjas, Kummer and Rosen explore the semialgebraic nature
%of the rank-1 tensor completion problem and describe the algebraic bounds
%of the region of a real tensor with parametric constraints\cite{TKMZ}.
%
We remark that finding a rank-$1$ tensor completion
is equivalent to solving the polynomial optimization problem:
\[
\min_{a,b,c} \quad \sum_{(i,j,k)\in\Omega}(\mathcal{A}_{ijk}-a_ib_jc_k)^2.
\]
It has $3$ vector variables $a,b,c$ and the polynomial has degree six.
However, solving the above polynomial optimization directly is not computationally attractive.
This is shown in Example~\ref{nls:sos}.

In this paper, we show that the rank-$1$ tensor completion problem
is equivalent to a special rank-$1$ matrix recovery problem.
To be precise, we show that \eqref{eq-general-tc-r1}
is equivalent to
\begin{equation}  \label{eq-quadratic-sec1}
\left\{ \baray{cl}
\text{find}  & X = ab^T  \in  \re^{n_1 \times n_2 } \\
\st  & \mA_{i_1 j_1 k} X_{i_2 j_2} -\mA_{i_2 j_2 k} X_{i_1 j_1}=0  \\
 &  \qquad \text{for} \quad (i_1,\,j_1,\,k), (i_2, j_2, k) \in \Omega, \, k = 1, \ldots, n_3.
\earay \right.
\end{equation}
The equality constraint in \reff{eq-quadratic-sec1}
is equivalent to the determinantal equation
\[
\det \bbm
\mA_{i_1 j_1 k} & X_{i_1 j_1}\\
\mA_{i_2 j_2 k} &  X_{i_2 j_2}
\ebm=0 .
\]
The problem \reff{eq-quadratic-sec1}
is a special class of rank-$1$ matrix recovery.
The relationship between \reff{eq-general-tc-r1} and \reff{eq-quadratic-sec1}
is studied in Subsection~\ref{ssc:red2mat}.

A specially interesting case arises when
the values of $X_{ij}$ for $(i,j,k) \in \Omega$
are uniquely determined (up to scaling) by
equality constraints in \reff{eq-quadratic-sec1}.
For this case, the tensor completion problem can be reduced to
the rank-$1$ matrix completion problem
and the tensor $\mathcal{A}$ is said to be strongly rank-1 completable
(see Definition~\ref{de-strongly}),
if a rank-$1$ tensor completion exists.
Furthermore, if the corresponding bipartite graph is connected,
the rank-$1$ matrix completion problem can be solved
by an iterative formula.
These results are shown in Subsection~\ref{ssc:strongrk1}.

When $\mathcal{A}$ is not strongly rank-1 completable,
a natural approach for solving \reff{eq-quadratic-sec1}
is to apply the nuclear norm relaxation, shown in Subsection~\ref{ssc:nuclear}.
The nuclear norm relaxation may fail to find a rank-$1$ tensor completion.
However, we can always get a rank-$1$ tensor completion
by solving the moment hierarchy of semidefinite programming relaxations,
or we detect its nonexistence.
This is shown in Section~\ref{sec:tc-mom}.
When the tensor $\mA$ is symmetric, there are more attractive properties
for the nuclear norm and moment relaxations.
This is shown in Section~\ref{sc:sym}.
Our proposed method for cubic tensors can be naturally
extended to higher order tensors, shown in Section~\ref{sc:highord}.
Numerical experiments are provided in Section~\ref{sc:num}.

%%%%%%%%%%%%%%%%%%%%%%%%%%%%%%%%%%%%%%%%%%%%%%%%%%%%%%%%%%%%%%%%%%%
\section{Preliminaries}\label{sec:pre}
\subsection{Notation}

The symbol $\mathbb{N}$ (resp., $\mathbb{R}$, $\mathbb{C}$)
denotes the set of nonnegative integers (resp., real, complex numbers).
For integer $k>0$, $[k]$ stands for the set $\left\{ 1,\ldots, k\right\}$.
%
%For $x=(x_1,\cdots, x_n)$ and $\alpha=(\alpha_1,\cdots \alpha_n)$,
% denote the monomial power
%\[
%x^{\alpha}:=x_1^{\alpha_1}\cdots x_n^{\alpha_n}.
%\]
%
For a finite set $T$, its cardinality is denoted as $|T|$.
We denote by $\re[x]$ the ring of polynomials in
$x=(x_1,\cdots, x_n)$ and with real coefficients.
The subset of polynomials of degree $d$ in $\re[x]$
is denoted by $\re[x]_d$.
The superscript $^T$ denotes the transpose of a matrix or vector.
For a symmetric matrix $X$, the inequality $X \succeq 0$ (resp., $X \succ 0$)
means that $X$ is positive semidefinite (resp., positive definite).
The cone of all $N$-by-$N$ real symmetric positive semidefinite matrices
is denoted as $\mc{S}_+^N$.
A subset $I$ $\subseteq \mathbb{R} \left [ x \right ]$
is called an ideal if $pq \in I$ for all $p\in I$
and for all $q \in \mathbb{R}\left [ x \right ]$,
and $a+b\in I$ for all $a,b\in I$. For a tuple $h=(h_1,\cdots,h_m)$
of polynomials in $\mathbb{R}\left [ x \right ]$, we denote the ideal
\[
\ideal{h}  \coloneqq
h_1\cdot\mathbb{R}[x]+\cdots+h_m\cdot\mathbb{R}[x].
\]
For a set of polynomials $P \subseteq \mathbb{R}[x]$,
its real variety is the set
\[
V_{\mathbb{R}}(P)  \, \coloneqq  \,
\left\{x\in \mathbb{R}^n:\, p(x)=0\, \, \forall p\in P \right\} .
\]
A polynomial $p$ is said to vanish on a set $T$ if $p(u)=0$ for all $u\in T$.
If so, we write $p \equiv 0$ on $T$ or $p\vert_{T} \equiv 0$.

For $\alpha=(\alpha_1, \ldots, \alpha_n)\in\mathbb{N}^n$, define
$\left|\alpha  \right|  \coloneqq  \alpha_1+\cdots+\alpha_n$.
We denote the power set
\[
\mathbb{N}_d^n \coloneqq  \left\{ \alpha \in \N^n:\, \left| \alpha \right| \leq d\right\}.
\]
For $x=(x_1,\ldots,x_n)$, we denote the monomial power
$
x^{\alpha} \coloneqq  x_1^{\alpha_1}\cdots x_n^{\alpha_n}.
$
The column vector of all monomials in $x$ and of degrees up to $d$ is denoted
\[
\left [  x\right ]_{d}  \coloneqq
\left [  1\quad x_1\quad \cdots \quad x_n\quad x_1^2\quad x_1x_2\quad\cdots \quad x_n^d\right ]^{T}.
\]
The length of the vector $\left [  x\right ]_{d}$ is $(\substack{n+d\\d})$.

%\begin{definition}\label{de-sos}
A polynomial $f\in\mathbb{R}\left [  x\right ]$ is said to be a sum of squares (SOS)
if there exist polynomials $p_1,\cdots,p_k\in\mathbb{R}\left [  x\right ]$
such that $f=p_1^2+\cdots+p_k^2$. The set of all SOS polynomial in $\re[x]$
is denoted as $\Sigma[x]$.
%\end{definition}
For a degree $d$, we denote the truncation
\[
\Sigma\left [  x\right ]_{d}   \, \coloneqq \,
\Sigma\left [  x\right ] \cap  \re[x]_d .
\]
%\begin{lemma}\cite{mpo}\label{le-sos}
It is well known that (see \cite[Sec.~2.4]{mpo})
a polynomial $f \in \re [x]_{2d}$ is SOS if and only if
there exists $X \in \mc{S}_{+}^{N}$, with $N = \binom{n+d}{d}$, such that
\begin{equation}
f  =  \left [  x\right ]_{d}^{T}  \cdot X \cdot\left [  x\right ]_{d}.
\end{equation}
%\end{lemma}
For a tuple $g=(g_1,\cdots,g_m)$, its quadratic module is the set
\[
\qmod{g} \, \coloneqq \,
\Sigma \left [ x \right ]+g_1\cdot\Sigma \left [ x \right ]+
\cdots+g_m\Sigma \left [ x \right ].
\]
When $g$ is empty, $\qmod{ \emptyset } =  \Sigma [x]$.
For a degree $d$, we denote the truncation (let $g_0=1$)
\[
\qmod{g}_{d} \, \coloneqq \,
\Big \{\sum_{i=0}^{m}\sigma _{i}g_{i} :\sigma_i\in \Sigma [x],\,
\deg(\sigma_ig_i ) \leq d  \Big \}.
\]
%%%%%%%%%%%%%%%%%%%%%%%%%%%%%%%%%%%%%%%%%%%%%%%%%%%%%%%%%%%%%%%%%
%%%%%%%%%%%%%%%%%%%%%%%%%%%%%%%%%%%%%%%%%%%%%%%%%%%%%%%%%%%%%%%%%

\subsection{Moment and localizing matrices}
\label{ssc:momloc}

The notation $\mathbb{R}^{\mathbb{N}^n_{d}}$ stands for the space of real vectors
$y$ that are indexed by $\alpha\in \mathbb{N}^n_{d}$, i.e.,
\[
y  = (y_{\alpha})_{\alpha\in\mathbb{N}^n_{d}}.
\]
Such a vector $y$ is called a truncated multi-sequence (tms) of degree $d$.
The tms $y$ is said to admit a Borel measure $\mu$ on $\re^n$ if
\[
y_{\alpha}=\int x^{\alpha }d \mu  \quad \text{for all}\, \,
 \alpha\in \mathbb{N}^n_{d}.
\]
If it exists, such $\mu$ is called a representing measure for $y$.
The support of $\mu$ is the smallest closed set $T$ such that
$\mu (\re^n \setminus T) = 0$, and we denote it by $\supp{u}$.
The measure $u$ is said to be supported in a set $K$ if $\supp{\mu} \subseteq K$.

For $f \in \re[x]_d$ and $y\in \mathbb{R}^{\mathbb{N}^n_{d}}$,
we define the bilinear operation
\[
\langle  f, y \rangle  \, \coloneqq \,
\sum_{\alpha\in\mathbb{N}^n_{d}}f_{\alpha}y_{\alpha} \quad \text{for} \quad
f=\sum_{\alpha\in\mathbb{N}^n_{d}}f_{\alpha}x^{\alpha}.
\]
For an integer $\ell \in [ 0, d/2]$,
the $\ell$th order moment matrix of $y$ is the symmetric matrix
$M_\ell [y]$ such that
\be \label{eq-mom-mat}
\langle p^2,y  \rangle  \, \coloneqq \,  \vec{p}^T \cdot
 M_{l}[y]  \cdot \vec{p}
\qquad \text{for all} \quad p \in \re[x]_\ell.
\ee
Here $\vec{p}$ denotes the coefficient vector of $p$,
listed in the graded lexicographical order.
Indeed, one can see that
\[
M_\ell [y] =(y_{\alpha+\beta})_{\alpha\in\mathbb{N}_\ell^n,\,\beta\in\mathbb{N}_\ell^n}.
\]
For instance, when $n=2$ and $\ell = 2$, we have
\[
M_2[y] = \left[
\begin{array}{cccccc}
y_{00} & y_{10} & y_{01} & y_{20} & y_{11} & y_{02} \\
y_{10} & y_{20} & y_{11} & y_{30} & y_{21} & y_{12} \\
y_{01} & y_{11} & y_{02} & y_{21} & y_{12} & y_{03} \\
y_{20} & y_{30} & y_{21} & y_{40} & y_{31} & y_{22} \\
y_{11} & y_{21} & y_{12} & y_{31} & y_{22} & y_{13} \\
y_{02} & y_{12} & y_{03} & y_{22} & y_{13} & y_{04} \\
\end{array} \right].
\]
%
%A vector $z\in\mathbb{R}^{N_d^n}$ gives the linear functional
%\[
%\mathcal{L}_z:\quad \mathbb{R}\left [ x\right ]_d \to \mathbb{R},\quad x^{\alpha}\mapsto z_{\alpha}(\alpha\in\mathbb{N}_d^n).
%\]
%
For a polynomial $h \in \re[x]_d$, denote by $\mathscr{V}_{h}^{(d)}[y]$
the vector such that
\[
\left\langle h \cdot ( v^T[x]_s ), y \right\rangle \, =  \,
v^T   \mathscr{V}_{h}^{(d)}[y] ,
\]
for every column vector $v$
of length $\binom{n+s}{s}$, where $s = d - \deg(h)$.
The $\mathscr{V}_{h}^{(d)}[y]$ is called the localizing vector of $h$
and generated by $y$ \cite{mpo}.
For instance, when $h=1-x_1x_2$ and $n=2,d=4$, we have
\[
\mathscr{V}_{h}^{(4)}[y]  \, = \,
\bbm y_{00} - y_{11} \\
y_{10} - y_{21} \\   y_{01} - y_{12} \\
y_{20} - y_{31} \\   y_{11} - y_{22} \\    y_{02} - y_{13}  \ebm.
\]

Given a polynomial $f \in \re[x]$ and a finite set of polynomials $P \subseteq \re[x]$,
we consider the constrained optimization problem:
\be   \label{min:f:p=0}
\left\{
\baray{cl}
\min  & f(x)  \\
\st  &  p(x) = 0 \,\,\, \text{for}\,\,\, p \in P.
\earay
\right.
\ee
Its global optimizers can be computed by moment relaxations.
For $\ell = 1, 2, \ldots$, the $\ell$th order moment relaxation is
\be    \label{fp:mom}
\left\{ \baray{cl}
     \min   & \langle  f, y \rangle  \\
 \st  &  \mathcal{V}_{p }^{(2\ell)} [ y ]=0 \,\,\, \text{for}\,\,\, p \in P,  \\
\quad & M_\ell [ y ]\succeq 0,\\
\quad & y_0=1,\, y\in\mathbb{R}^{\mathbb{N}_{2\ell}^{\bar{n} } }.
\earay \right.
\ee
Its dual optimization problem is the $\ell$th order SOS relaxation
\be  \label{fp:sos}
\left\{
\baray{cl}
\max  & \gamma \\
\st  & f - \gamma  \in  \ideal{P}_{2\ell}
      +  \Sigma[x]_{2\ell}.
\earay \right.
\ee
The sequence of relaxations \reff{fp:mom}--\reff{fp:mom}
is often referenced as the Moment-SOS hierarchy for solving \reff{min:f:p=0}.
We refer to \cite{LasBk15,LasICM,Lau09,mpo} for more detailed introductions
to the Moment-SOS hierarchy.

%%%%%%%%%%%%%%%%%%%%%%%%%%%%%%%%%%%%%%%%%%%%%%%%%%%%%%%%%%%%%%%%%%%%%%%%%%%%%%%%%%%%%%%%%
\section{Rank-1 tensor completion and matrix recovery}
\label{sec:tc-matrix}

For a partially given tensor $\mA \in \re^{n_1 \times n_2 \times n_3}$,
let $\Omega$ be the set of indices $(i,j,k)$
such that the tensor entry $\mA_{ijk}$ is given.
It is a subset of $[ n_1 ] \times [ n_2]  \times [ n_3 ]$.
We write $\Omega$ as the union
\be \label{eq-omega}
\Omega \,  = \,  \bigcup_{k=1}^{n_3}  \Omega_k ,
\ee
where the set $\Omega_k$ is
\[
\Omega_k \, \coloneqq \, \left\{ (i_1,j_1,k),\,(i_2,j_2,k),\cdots(i_{m_k},j_{m_k},k)\right\} .
\]
Let $m_k=\left|\Omega_k \right|$, the cardinality of $\Omega_k$.
If $\mA = a \otimes b \otimes c$,
then for each $k = 1, \ldots, n_3$,
\begin{equation}\label{eq-rank}
\begin{bmatrix}
  \mathcal{A}_{i_1j_1k}\\
\mathcal{A}_{i_2j_2k}\\
\vdots \\
 \mathcal{A}_{i_{m_k}j_{m_k}k}
\end{bmatrix}
=\begin{bmatrix}
a_{i_1}b_{j_1}\\
a_{i_2}b_{j_2}\\
\vdots \\
a_{i_{m_k}}b_{j_{m_k}}
\end{bmatrix}c_k.
\end{equation}
The above implies $\rank \, B_k \le 1$, for the matrix
\[
B_k  \coloneqq  \begin{bmatrix}
 \mathcal{A}_{i_1j_1k} & a_{i_1}b_{j_1} \\
\mathcal{A}_{i_2j_2k} & a_{i_2}b_{j_2} \\
 \vdots & \vdots  \\
\mathcal{A}_{i_{m_k}j_{m_k}k} & a_{i_{m_k}}b_{j_{m_k}} \\
\end{bmatrix}.
\]
Clearly, $\rank \, B_k \le 1$ if and only if all
$2$-by-$2$ minors of $B_k$ are zeros, i.e.,
\begin{equation}\label{eq-1}
\det \bbm  \mA_{i_s j_s k} & a_{i_s}b_{j_s}  \\ \mA_{i_t j_t k} & a_{i_t}b_{j_t} \ebm  =
\mathcal{A}_{i_sj_sk}a_{i_t}b_{j_t}-\mathcal{A}_{i_tj_tk}a_{i_s}b_{j_s}=0,
\end{equation}
for all $1 \le s < t \le m_k$.
This gives $\binom{m_k}{2}$ quadratic equations.
There are totally $\sum_{k=1}^{n_3}\binom{m_k}{2}$ such equations.
In the above, we have $\rank \, B_k = 1$ if at least one entry of
$B_k$ is nonzero.

Let $(\hat{i}, \hat{j}, \hat{k})$
be the index of any tensor entry of largest absolute value:
\be \label{eq-label}
 |\mathcal{A}_{\hat{i}\hat{j}\hat{k}} |   \,\, =
 \max_{(i,\,j,\,k)\in \Omega} \left| \mathcal{A}_{{i}{j}{k}} \right|.
\ee
For a rank-$1$ completion $\mathcal{A} = a \otimes b \otimes c$ with nonzero $a,b$,
we can scale them such that $a_{\hat{i}}=b_{\hat{j}}=1$.
So, the rank-$1$ tensor completion is equivalent to
\be   \label{eq-rank-1}
\left\{
\baray{rl}
\mbox{find} & (a,\,b,\,c)\in\mathbb{R}^{n_1}\times \mathbb{R}^{n_2}\times \mathbb{R}^{n_3}\\
\st  &\mathcal{A}_{i_sj_sk}a_{i_t}b_{j_t}-\mathcal{A}_{i_tj_tk}a_{i_s}b_{j_s}=0  \\
       &  \qquad \text{for} \quad   (i_s, j_s, k), (i_t, j_t, k) \in \Omega_k, \, k = 1, \ldots, n_3,  \\
   &\mathcal{A}_{ijk}=a_i b_j c_k \quad   \text{for} \quad   (i,\,j,\,k)\in \Omega,  \\
  &a_{\hat{i}}=b_{\hat{j}}=1. \\
\earay
\right.
\ee

%%%%%%%%%%%%%%%%%%%%%%%%%%%%%%%%%%%%%%%%%%%%%%%%%%%%%%%%%%%%%%%%%%%%%%%%%%%%%%%%%%%%%%%%%%%%%
\subsection{Reduction to matrix recovery}
\label{ssc:red2mat}

If we let $X = ab^T$, then $\eqref{eq-1}$ is equivalent to
\begin{equation}  \label{eq-equa-matrix}
\mathcal{A}_{i_s j_s k}X_{i_t j_t}-\mathcal{A}_{i_t j_t k}X_{i_s j_s}=0
\end{equation}
for all feasible indices $s,t, k$.
If $\mathcal{A}$ has a rank-$1$ completion, i.e.,there is a pair $(a, b)$
satisfying \eqref{eq-1}, then the matrix $X = ab^T$ satisfies \reff{eq-equa-matrix}.
Therefore, we consider the matrix recovery problem
\be \label{eq-equivalence}
\left\{
\baray{cl}
\text{find}  &  X = ab^T \in\mathbb{R}^{n_1\times n_2}  \\
\st  & \mathcal{A}_{i_sj_sk}X_{i_tj_t}-\mathcal{A}_{i_tj_tk}X_{i_sj_s}=0   \\
   &  \qquad \text{for} \quad   (i_s,\,j_s,\,k), (i_t,\,j_t,\,k)\in \Omega_k,\,k=1,\ldots, n_3, \\
  &  X_{\hat{i}\hat{j}}=1,
\earay
\right.
\ee
where the pair $(\hat{i},\hat{j})$ is as in \reff{eq-label}.
The relationship between the tensor completion problem~\reff{eq-rank-1}
and the matrix recovery problem~\reff{eq-equivalence}
is given as follows.

\begin{theorem}\label{th-equivalence}
Suppose $\mA$ is the partially given tensor as above.
If $X=ab^T$ is feasible for \reff{eq-equivalence} and
for each  $k =1, \ldots, n_3$, the following equation
\begin{equation}  \label{eq-solve-ck}
\begin{bmatrix}
  \mathcal{A}_{i_1j_1k}\\
\mathcal{A}_{i_2j_2k}\\
\vdots \\
 \mathcal{A}_{i_{m_k}j_{m_k}k}
\end{bmatrix}
=\begin{bmatrix}
a_{i_1}b_{j_1}\\
a_{i_2}b_{j_2}\\
\vdots \\
a_{i_{m_k}}b_{j_{m_k}}
\end{bmatrix}c_k
\end{equation}
has a nonzero coefficient for $c_k$,
then $\mathcal{A}$ has the rank-$1$ completion $\mA = a \otimes b \otimes c$
with $c =(c_1, \ldots, c_{n_3})$. Conversely, if $\mathcal{A}$ is rank-$1$
completable and $\mA_{\hat{i} \hat{j} \hat{k} } \ne 0$,
then \reff{eq-equivalence} has a rank-$1$ feasible matrix solution $X=ab^T$.
\end{theorem}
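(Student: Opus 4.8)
The plan is to prove the two directions of the equivalence separately, using the linear-algebraic characterization that rank-$1$ of a matrix is equivalent to vanishing of all its $2\times 2$ minors, which is exactly the content of \reff{eq-1} and \reff{eq-equa-matrix}.

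For the forward direction, I assume $X = ab^T$ is feasible for \reff{eq-equivalence} and that for each $k$ the linear system \reff{eq-solve-ck} in the scalar unknown $c_k$ has a nonzero coefficient vector (i.e.\ the column $(a_{i_1}b_{j_1}, \ldots, a_{i_{m_k}}b_{j_{m_k}})^T$ is not the zero vector). First I would observe that the feasibility of $X=ab^T$ means precisely that for each fixed $k$ the two columns of the matrix $B_k$ are linearly dependent, since the constraints in \reff{eq-equivalence} say all $2\times 2$ minors of $B_k$ vanish. Given that the second column is nonzero by the nonzero-coefficient hypothesis, the first column (the data column $\mathcal{A}_{\cdot\cdot k}$) must be a scalar multiple of it; call that scalar $c_k$, which is well-defined and solves \reff{eq-solve-ck}. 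I would then define $c = (c_1, \ldots, c_{n_3})$ and verify that $\mathcal{A}_{ijk} = a_i b_j c_k$ holds for every $(i,j,k) \in \Omega$, which is exactly the entrywise reading of \reff{eq-solve-ck} over all $k$. This yields the claimed rank-$1$ completion $\mathcal{A} = a \otimes b \otimes c$.

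For the converse, I assume $\mathcal{A}$ is rank-$1$ completable, say $\mathcal{A} = a \otimes b \otimes c$ on $\Omega$, and that $\mathcal{A}_{\hat i \hat j \hat k} \ne 0$, where $(\hat i, \hat j, \hat k)$ is the index of largest absolute value as in \reff{eq-label}. The key point is that $\mathcal{A}_{\hat i \hat j \hat k} = a_{\hat i} b_{\hat j} c_{\hat k} \ne 0$ forces $a_{\hat i} \ne 0$, $b_{\hat j} \ne 0$, and $c_{\hat k} \ne 0$. I would then rescale: replace $a$ by $a / a_{\hat i}$ and $b$ by $b / b_{\hat j}$ (absorbing the product $a_{\hat i} b_{\hat j}$ into $c$), so that after rescaling $a_{\hat i} = b_{\hat j} = 1$ and the completion is unchanged. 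Setting $X = ab^T$ then gives $X_{\hat i \hat j} = a_{\hat i} b_{\hat j} = 1$, meeting the normalization constraint, and substituting $\mathcal{A}_{i_sj_sk} = a_{i_s} b_{j_s} c_k$ into the left side of each constraint in \reff{eq-equivalence} shows it reduces to $a_{i_s}b_{j_s}c_k \cdot a_{i_t}b_{j_t} - a_{i_t}b_{j_t}c_k \cdot a_{i_s}b_{j_s} = 0$, so $X$ is feasible.

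I expect the main subtlety to be bookkeeping around the nonzero-coefficient and normalization hypotheses rather than any deep difficulty: in the forward direction one must be careful that the hypothesis on \reff{eq-solve-ck} is exactly what rules out a zero data-column (which would leave $c_k$ undetermined or the minor conditions vacuous), and in the converse one must check that the rescaling is legitimate precisely because the largest-magnitude entry is nonzero, guaranteeing the scaling factors $a_{\hat i}, b_{\hat j}$ are nonzero. The algebraic verifications themselves are routine substitutions into the determinantal identity, so the argument is essentially a careful translation between the rank-$1$ condition on each slice matrix $B_k$ and the factored form $X = ab^T$.
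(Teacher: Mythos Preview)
Your proposal is correct and follows essentially the same approach as the paper's proof: both directions hinge on the observation that the constraints in \reff{eq-equivalence} are exactly the vanishing of the $2\times 2$ minors of $B_k$, so that (with the nonzero-coefficient hypothesis) the data column is a scalar multiple $c_k$ of the second column, and conversely a factored completion $a\otimes b\otimes c$ makes all such minors vanish after the rescaling permitted by $\mathcal{A}_{\hat i\hat j\hat k}\ne 0$. Your treatment of the rescaling in the converse is in fact slightly more careful than the paper's, since you explicitly note that $a_{\hat i}b_{\hat j}c_{\hat k}\ne 0$ forces the individual entries $a_{\hat i}$ and $b_{\hat j}$ to be nonzero.
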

\begin{proof}
If $X = ab^T$ is feasible for \reff{eq-equivalence}, then
$X_{\hat{i}\hat{j}} =  a_{\hat{i}} b_{\hat{j}} = 1$.
Up to a scaling, we can further assume
$a_{\hat{i}} = b_{\hat{j}} = 1$.
Moreover, $a$ and $b$ also satisfy the equation
\[
\mathcal{A}_{i_sj_sk}a_{i_t}b_{j_t}-\mathcal{A}_{i_tj_tk}a_{i_s}b_{j_s}=0
\]
for all $(i_s,\,j_s,\,k), (i_t,\,j_t,\,k)\in \Omega_k$
and $k=1,\ldots, n_3$, so $\rank\, B_k \le 1$.
This means that the equation \eqref{eq-solve-ck}
has a solution $c_k$ for each $k$,
since the coefficient vector is nonzero.
Let $c=(c_1,\cdots c_{n_3})$, then $a_ib_jc_k=\mathcal{A}_{ijk}$
for all $(i,\,j,\,k)\in \Omega$.
So, $a \otimes b \otimes c$ is a rank-$1$ tensor completion for $\mA$.

Conversely, if $\mA$ is rank-$1$ completable, say, $\mA =a \otimes b \otimes c$,
then for all $(i,\,j,\,k)\in \Omega$, $\mathcal{A}_{ijk}=a_ib_jc_k$.
Each $c_k$ is a solution for $\eqref{eq-solve-ck}$,
so $\rank\, B_k \le 1$ and \reff{eq-1} holds.
Let $X=a b^T$, then \reff{eq-equa-matrix} holds.
Since $\mA_{\hat{i} \hat{j} \hat{k} } \ne 0$ and $a, b$ are non-zero vectors,
we scale them such that $a_{\hat{i}} = b_{\hat{j}}=1$.
Then, $X_{\hat{i}\hat{j}}=1$ and hence
$X$ is a rank-$1$ feasible matrix solution for \reff{eq-equivalence}.
\end{proof}

\begin{remark}\label{re-no-solution}
\rm
%\begin{enumerate}[(i)]
(i) If the coefficient vector in the right-hand of $\eqref{eq-solve-ck}$
is zero and the left-hand side one is also zero,
then we can select arbitrary value for $c_k$.
In this case, the rank-$1$ tensor completion is not unique.
(ii) If the coefficient vector in the right-hand of $\eqref{eq-solve-ck}$
is zero but the left-hand side one is nonzero, then \reff{eq-solve-ck}
has no feasible solution and a rank-$1$ completion does not exist.
%\end{enumerate}
\end{remark}

\subsection{Strongly rank-1 completable tensors}
\label{ssc:strongrk1}

We consider a special class of rank-$1$ tensor completion problems
that can be reduced to rank-$1$ matrix completions.
Denote the index set
\begin{equation}  \label{eq-label-2}
\widetilde{\Omega}  \, \coloneqq \,
\left\{ (i,\,j): \,  (i,\,j,\,k)\in\Omega\right\} .
\end{equation}
It is the projection of $\Omega$ on the first two indices.
The equations of \reff{eq-equa-matrix} are homogenous
in the vector of partial matrix entries
\[
X_{ \widetilde{\Omega}  }   \, \coloneqq \,
(X_{ij})_{ (i,j)  \in \widetilde{\Omega}  } .
\]
Note that $X_{ \widetilde{\Omega}  }$ can be viewed as a vector in
$\re^{ \widetilde{\Omega} }$.
The set of solutions to the linear system \reff{eq-equa-matrix}
is a subspace of $\re^{ \widetilde{\Omega} }$.
We are interested in the case that \reff{eq-equa-matrix}
has a unique solution (up to scaling), i.e.,
the subspace of its solutions is one dimensional.
This leads to the following definition.

\begin{defi}\label{de-strongly}
The partially given tensor $\mathcal{A}$ is strongly rank-$1$ completable
if it has a rank-$1$ completion and the subspace of solutions
$X_{ \widetilde{\Omega}  }$ to \reff{eq-equa-matrix}
is one dimensional.
\end{defi}

Let $(\hat{i}, \hat{j},\,\hat{k})$ be the index as in \reff{eq-label}.
If $X_{ \widetilde{\Omega}  }$ is a nonzero solution to \reff{eq-equa-matrix},
we can scale it such that $X_{ \hat{i} \hat{j}  } = 1$.
If the subspace of solutions $X_{ \widetilde{\Omega}  }$ to \reff{eq-equa-matrix}
is one dimensional, there exist scalars $w_{ij}$ such that
\be \label{eq:wij}
X_{ \hat{i} \hat{j}  } = 1, \quad
X_{ij} = w_{ij} \quad \text{for} \,\, (i,j) \in  \widetilde{\Omega} .
\ee
So, when $\mathcal{A}$ is strongly rank-$1$ completable,
the problem \eqref{eq-equivalence} is equivalent to
the rank-$1$ matrix completion problem
\begin{equation}\label{eq-matrix-com}
\left\{
\baray{cl}
\text{find}  & X = a b^T \in\mathbb{R}^{n_1\times n_2}  \\
\st  & X_{ij} = w_{ij} \quad \text{for} \quad (i,\,j)\in \widetilde{\Omega}, \\
  &  X_{ \hat{i} \hat{j}  } = 1 .
\earay
\right.
\end{equation}
%
%The set $\widetilde{\Omega}$ is the set of indices of
%observed entries represented by $W_{ij}$.
%

%
%Let $\mathbb{R}_{\Omega}\enspace  \coloneqq \enspace \mathbb{R}^{m\times n}\to
%\mathbb{R}^{\left|\Omega\right|}$ is the mapping that
%corresponds to extracting the observed entries of the matrix.
%
The rank-$1$ matrix completion problem~\reff{eq-matrix-com}
can be solved explicitly when the bipartite graph determined by
$\widetilde{\Omega}$ is connected (see \cite{C-D-17}).
Let $V_1$,$V_2$ be the sets:
\[
V_1 = \{i \in [n_1]:  (i,j) \in \widetilde{\Omega} \}, \quad
V_2 = \{j \in [n_2]:  (i,j) \in \widetilde{\Omega} \} .
\]
Consider the bipartite graph $G(V_1,\,V_2,\,\widetilde{\Omega})$ with vertex sets
$V_1,V_2$ and whose edge set is $\widetilde{\Omega}$.
When $G(V_1,\,V_2,\,\widetilde{\Omega})$ is connected, the rank-$1$ matrix
$X$ satisfying \reff{eq-matrix-com} is unique and
it can be determined by an iterative formula.
This can be seen as follows.
For $X_{ \hat{i} \hat{j} } = 1$, we can let
\[  a_{\hat{i}} = 1, \quad  b_{\hat{j}}=1 . \]
Since $G(V_1,\,V_2,\,\widetilde{\Omega})$ is connected,
there exist indices $i_1, j_1$ such that
$(i_1, \hat{j}) \in \widetilde{\Omega}$ and
$(\hat{i}, j_1) \in \widetilde{\Omega}$, so we can get
\be \nn
a_{i_1} = w_{i_1 \hat{j}}, \quad b_{j_1} = w_{\hat{i} j_1}.
\ee
Similarly, there exist indices $i_2, j_2$ such that
$(i_1, j_2) \in \widetilde{\Omega}$ and
$(i_2, j_1) \in \widetilde{\Omega}$, so
\be \nn
a_{i_2} = w_{i_2 j_1}/b_{j_1}, \quad b_{j_2} = w_{i_1 j_2}/a_{i_1}.
\ee
When the bipartite graph $G(V_1,\,V_2,\,\widetilde{\Omega})$ is connected,
repeating the above can produce edge connections:
\[
\begin{gathered}
 a_{i_1} \rightarrow b_{j_2} \rightarrow a_{i_3}\rightarrow b_{j_4}\rightarrow \cdots , \\
 b_{j_1} \rightarrow a_{i_2} \rightarrow b_{j_3}\rightarrow a_{i_4}\rightarrow \cdots  .
\end{gathered}
\]
So, the entries of $a,b$ can be given by the iterative formula
\be  \label{ail:bjl:ratio}
a_{i_{l+1}} = w_{i_{l+1} j_l}/b_{j_l}, \quad
b_{j_{l+1}} = w_{i_l j_{l+1}}/a_{i_l}, \quad l = 1, 2, \ldots .
\ee
When the graph $G(V_1,\,V_2,\,\widetilde{\Omega})$ is not connected,
it is a union of connected subgraphs.
We can do similar things for each of them.
For such a case, the rank one completion for $X$ is not unique.
We refer to \cite{C-D-17} for this.

The following is an example of applying the iterative formula
\reff{ail:bjl:ratio} to get rank-$1$ tensor completions.

\begin{example}
Consider the tensor $\mA \in \re^{ 3\times3\times 3 }$ with given entries:
\[
\begin{matrix}
\mathcal{A}_{111}=-1, & \mathcal{A}_{221}=-1, & \mathcal{A}_{311}=-1, & \mathcal{A}_{132}=1,  \\
\mathcal{A}_{312}=-1, & \mathcal{A}_{233}=1, & \mathcal{A}_{313}=1, & \mathcal{A}_{323}=-1.  \\
\end{matrix}
\]
We have $\widetilde{\Omega} =  \big \{
(1,\,1),\,(1,\,3),\,(2,\,2) ,\,(2,\,3),\,(3,\,1),\,(3,\,2)\big \}$.
The equation \reff{eq-equa-matrix} gives
\begin{equation}\label{eq-ex1-1}
\left\{
\begin{matrix}
\mathcal{A}_{111}X_{22}-\mathcal{A}_{221}X_{11}=0, &   \mathcal{A}_{111}X_{31}-\mathcal{A}_{311}X_{11}=0,\\
\mathcal{A}_{221}X_{31}-\mathcal{A}_{311}X_{22}=0, &   \mathcal{A}_{132}X_{31}-\mathcal{A}_{312}X_{13}=0,\\
\mathcal{A}_{233}X_{31}-\mathcal{A}_{313}X_{23}=0, &   \mathcal{A}_{233}X_{32}-\mathcal{A}_{323}X_{23}=0,\\
\mathcal{A}_{313}X_{32}-\mathcal{A}_{323}X_{31}=0. &   \quad\\
\end{matrix} \right.
\end{equation}
%Let $u=(X_{11},\,X_{13},\,X_{22},\,X_{23},\,X_{31},\,X_{32})$,
Then, $\eqref{eq-ex1-1}$ can be written as
\begin{equation}\label{eq-ex1-2}
\left[\begin{array}{rrrrrrr}
1 & 0 & -1 & 0  & 0  & 0 \\
1 & 0 & 0  & 0  & -1 & 0 \\
0 & 0 & 1  & 0  & -1 & 0 \\
0 & 1 & 0  & 0  & 1  & 0 \\
0 & 0 & 0  & -1 & 1  & 0 \\
0 & 0 & 0  & 1  & 0  & 1 \\
0 & 0 & 0  & 0  & 1  & 1  \\
\end{array}\right]
\begin{bmatrix}
X_{11} \\  X_{13}  \\  X_{22} \\  X_{23} \\  X_{31}  \\  X_{32}
\end{bmatrix}
=0 .
\end{equation}
%%The \textcolor{red}{kernel of the above matrix}
The subspace of solutions for the above equation
is one dimensional, spanned by
\[
(1,\,-1,\,1,\,1,\,1,\,-1).
\]
Let $(\hat{i},\,\hat{j},\,\hat{k})=(1,\,1,\,1)$,
since $\left|\mathcal{A}_{ijk} \right|=1$ for all $(i,\,j,\,k)\in \Omega$.
So, we let $X_{11}=1$ and $a_1=b_1=1$.
The values of $X_{ij}$ and $w_{ij}$ in \reff{eq:wij} are:
\[
\begin{array}{lll}
   X_{11}=w_{11}=1, & X_{13}=w_{13}=-1, & X_{22}=w_{22}=1, \\
  X_{23}=w_{23}=1, & X_{31}=w_{31}=1, & X_{32}=w_{32}=-1.
\end{array}
\]
Then, we can get $a$, $b$ by the formula \eqref{ail:bjl:ratio} as:
\[
\begin{aligned}
 a_1 & =1 \xrightarrow[]{ w_{13}=-1  } b_3=-1 \xrightarrow[]{ w_{23}=1 } a_2=-1 \xrightarrow[]{\ w_{22}=1 } b_2=-1, \\
 b_1 & =1 \xrightarrow[]{ w_{31}=1 } a_3=1.
\end{aligned}
\]
Hence, we get
\[
a=(1,\,-1,\,1),\quad b=(1,\,-1,\,-1).
\]
Finally, we get $c=(-1,\,-1,\,1)$ by solving $\eqref{eq-solve-ck}$,
which gives the rank-$1$ tensor completion
$\mathcal{A} = a\otimes b\otimes c$.
\end{example}

%%%%%%%%%%%%%%%%%%%%%%%%%%%%%%%%%%%%%%%%%%%%%%%%%%%%%%%%%%%%%%%%%%%%%%%%%%
\subsection{Nuclear norm relaxation}
\label{ssc:nuclear}

We discuss the case that the partially given tensor $\mA$ is not strongly rank-$1$ completable.
To find a feasible rank-$1$ matrix $X$ for \eqref{eq-equivalence},
a frequently used approach is to solve the nuclear norm relaxation
($\| X \|_*$ denotes the nuclear norm of $X$, i.e.,
the sum of all its singular values):
\be   \label{eq-matrec-3dim}
\left\{
\baray{cl}
\min   & \left\| X\right\|_{\ast}   \\
\st  &\mathcal{A}_{i_sj_sk}X_{i_tj_t}- \mathcal{A}_{i_tj_tk}X_{i_sj_s}=0  \\
             & \qquad \text{for} \quad  (i_s,\,j_s,\,k), (i_t,\,j_t,\,k)\in \Omega_k,\,k=1,\ldots, n_3, \\
\quad & X_{\hat{i}\hat{j}}=1 .
\earay
\right.
\ee
The matrix nuclear norm minimization
can be equivalently reformulated as a semidefinite program
(see \cite{candes2012,fazel2002}).
Indeed, up to applying the singular value decomposition of $X$,
 one can show that $\| X \|_{\ast}$ equals the minimum value of
\[
\left\{ \baray{cl}
 \min & \half \big( \mbox{Trace}(W_1)  + \mbox{Trace}(W_2)  \big) \\
\st & \bbm  W_1& X \\  X^T& W_2 \ebm \succeq 0.
\earay \right.
\]
Therefore, \reff{eq-matrec-3dim}
is equivalent to the semidefinite program
\begin{equation}\label{eq-sdp-matrec}
\left\{
\baray{cl}
\min         &  \mbox{Trace}(W_1)  + \mbox{Trace}(W_2)  \\
\st  &  \mathcal{A}_{i_sj_sk}X_{i_tj_t} - \mathcal{A}_{i_tj_tk}X_{i_sj_s}=0 \quad  \\
             & \qquad \text{for} \quad  (i_s,\,j_s,\,k), (i_t,\,j_t,\,k)\in \Omega_k,\,k=1,\ldots, n_3, \\
 \quad & \begin{bmatrix}
 W_1& X \\
 X^T& W_2 \\
\end{bmatrix}\succeq 0, \,
X_{\hat{i}\hat{j}}=1.
\earay
\right.
\end{equation}

We would like to remark that the nuclear norm relaxation \reff{eq-matrec-3dim}
sometimes returns a rank-$1$ matrix $X$ for \eqref{eq-equivalence},
while sometimes it does not. This is shown in the following example.

\begin{example}
(i) Consider the tensor $\mA \in \re^{ 4\times4\times 4}$ with given entries:
\[
\begin{matrix}
\mathcal{A}_{121}=2, & \mathcal{A}_{131}=4, & \mathcal{A}_{441}=1, & \mathcal{A}_{112}=4, \\
\mathcal{A}_{232}=4, & \mathcal{A}_{322}=2, & \mathcal{A}_{412}=2, & \mathcal{A}_{343}=1, \\
\mathcal{A}_{413}=1, & \mathcal{A}_{423}=1, & \mathcal{A}_{443}=1, & \mathcal{A}_{114}=2.  \\
\end{matrix}
 \]
Solving the nuclear norm relaxation \eqref{eq-matrec-3dim},
we get the optimal matrix
\[
X^*=\begin{bmatrix}
1 & 1 & 2 & 1 \\
\frac{1}{2} & \frac{1}{2} & 1 & \frac{1}{2} \\
\frac{1}{2} & \frac{1}{2} & 1 & \frac{1}{2}  \\
\frac{1}{2} & \frac{1}{2} & 1 & \frac{1}{2}  \\
\end{bmatrix} .
\]
It is rank-$1$ and $X^* = a^* {b^*}^T$, with
\[
a^* = ( 1, \,\frac{1}{2}, \,\frac{1}{2}, \,\frac{1}{2} ), \quad
b^* = (  1, \, 1,  \, 2,  \,  1 ) .
\]
Then, we get the vector $c^*=(2, \,4, \,2, \,2)$
by solving $\eqref{eq-solve-ck}$, which gives the rank-$1$ tensor completion
$\mA =a^* \otimes b^*  \otimes c^*$.

%%%%%%%%%%%%%%%%%%%%%%%%%%%%%%%%%%%%%%%%%%%%%%%%%%%%%%%%%%%%%%%
\noindent
(ii) Consider the tensor $\mA \in \re^{ 3\times  3  \times 3}$
with given entries:
\[
\mathcal{A}_{131}=4, \,\,
\mathcal{A}_{222}=2, \,\,
\mathcal{A}_{133}=4, \,\,
\mathcal{A}_{213}=1, \,\,
\mathcal{A}_{323}=1.
\]
Solving the nuclear norm relaxation \eqref{eq-matrec-3dim},
we get the optimal matrix
\[
X^*=\begin{bmatrix}
  0 & 0  & 1  \\
  \frac{1}{4} & 0  & 0  \\
  0 & \frac{1}{4}  & 0  \\
\end{bmatrix}.
\]
However, $\rank \,X^*=3>1$, so the nuclear norm relaxation \eqref{eq-matrec-3dim}
does not give a rank-$1$ tensor completion.
On the other hand, this tensor $\mA$ is rank-$1$ completable, since
\[
\mA \, = \, (1,\, \frac{1}{2},\, \frac{1}{2}) \otimes (1,\, 1,\, 2) \otimes (2,\, 4,\, 2).
\]
It is interesting to note that there exists a different tensor completion, e.g.,
\[
\mA \, = \, (1,\, \frac{1}{4},\, \frac{1}{4}) \otimes (1,\, 1,\, 1) \otimes (4,\, 8,\, 4).
\]
For this instance, the rank-$1$ tensor completion is not unique.
We expect at least $7$ given entries for it to be unique.
\end{example}

%%%%%%%%%%%%%%%%%%%%%%%%%%%%%%%%%%%%%%%%%%%%%%%%%%%%%%%%%%%%%%%
\section{The Moment-SOS Relaxations}
\label{sec:tc-mom}

The nuclear norm relaxation may not return a rank-$1$ tensor completion.
When this is the case, we propose a moment hierarchy of
semidefinite programming relaxations to solve the problem.
As shown in Section~\ref{sec:tc-matrix},  the rank-$1$ tensor completion
problem \eqref{eq-rank-1} is equivalent to the feasibility problem:
\begin{equation}\label{eq-poly}
\left\{
\baray{cl}
\text{find} & (a,b)\in \mathbb{R}^{n_1}\times \mathbb{R}^{n_2}  \\
\st & \mathcal{A}_{i_sj_sk}a_{i_t}b_{j_t}-\mathcal{A}_{i_tj_tk}a_{i_s}b_{j_s}=0  \\
& \qquad \text{for} \quad  (i_s,\,j_s,\,k), (i_t,\,j_t,\,k) \in \Omega_k,\,k = 1, \ldots,  n_3, \\
& a_{\hat{i}}=b_{\hat{j}}  = 1 .
\earay
\right.
\end{equation}
%
%Note that vectors $a$, $b$, $c$ of the tensor $a\otimes b\otimes c$
%can be scaled to $\lambda_1a\otimes \lambda_2b\otimes\lambda_3c$,
%for scalars $\lambda_1\lambda_2\lambda_3=1$.
%Therefore we can scale that $a_{\hat{i}}=1$, $b_{\hat{j}}=1$,
%
The above pair $(\hat{i},\, \hat{j})$ is as in \eqref{eq-label}.
To solve \eqref{eq-poly}, we select a coercive quadratic objective
\[
f(a, b) \, = \, \begin{bmatrix}
 a\\b
\end{bmatrix}^TF
\begin{bmatrix}
 a\\b
\end{bmatrix} .
\]
where $F$ is a symmetric positive definite matrix of order $(n_1+n_2)$.
Then, we consider the polynomial optimization problem:
\begin{equation}\label{cpo-s}
\left\{\baray{cl}
      \min    &  f(a,\,b)  \\
 \st  & \mathcal{A}_{i_sj_sk}a_{i_t}b_{j_t} -
      \mathcal{A}_{i_tj_tk}a_{i_s}b_{j_s}=0  \\
 & \qquad \text{for} \quad  (i_s,\,j_s,\,k), (i_t,\,j_t,\,k) \in \Omega_k,\,k = 1, \ldots,  n_3, \\
 &  a_{\hat{i}}  = b_{\hat{j}} =  1.
\earay\right.
\end{equation}
When $F$ is positive definite, the optimization problem \reff{cpo-s}
must have a minimizer if it is feasible.
Moreover, when $F$ is generically selected
(i.e., $F$ is chosen from a Zariski open set in the matrix space),
the minimizer of \reff{cpo-s} is unique.
We refer to \cite[Section~6.3]{HaPham} for uniqueness of
optimizers for polynomial optimization.

Assume $(a^*,\, b^*)$ is an optimizer for \eqref{cpo-s}.
If for each  $k  \in  [ n_3 ]$, the equation
 \begin{equation}\label{eq-solve-ck-5}
\begin{bmatrix}
  \mathcal{A}_{i_1j_1k}\\
\mathcal{A}_{i_2j_2k}\\
\vdots \\
 \mathcal{A}_{i_{m_k}j_{m_k}k}
\end{bmatrix}
=\begin{bmatrix}
a^*_{i_1}b^*_{j_1}\\
a^*_{i_2}b^*_{j_2}\\
\vdots \\
a^*_{i_{m_k}}b^*_{j_{m_k}}
\end{bmatrix}c_k^*
\end{equation}
has a nonzero coefficient for $c_k^*$,
then we get the rank-$1$ completion $\mathcal{A}=a^*\otimes b^*\otimes c^*$,
with $c^*=(c_1^*,\cdots,c_{n_3}^*)$.
This follows from Theorem~\ref{th-equivalence}.

For convenience of notation, denote the vector variable
\[
x \, \coloneqq \,  (a_1, \ldots, a_{\hat{i}-1}, a_{\hat{i}+1}, \ldots, a_{n_1},
b_1, \ldots, b_{\hat{j}-1}, b_{\hat{j}+1}, \ldots, b_{n_1} )
\in \re^{ \bar{n} } ,
\]
where $\bar{n} = n_1 + n_2 - 2$.
The objective $f(a,b)$ is a quadratic polynomial in $x$
and we write it as $f(x)$.
%
%For each triple $(s,t,k)$ with $(i_s,\,j_s,\,k), (i_t,\,j_t,\,k)\in \Omega_k$
%and $s<t$, denote the quadratic polynomial
%\[
%p_{s,t,k}(x)  =  \mathcal{A}_{i_sj_sk}a_{i_t}b_{j_t} -
%\mathcal{A}_{i_tj_tk}a_{i_s}b_{j_s}.
%\]
%
Denote the set of quadratic polynomials in $x$:
\be \label{polyset:Phi}
\Phi  = \left\{ \mathcal{A}_{i_sj_sk}a_{i_t}b_{j_t} -
\mathcal{A}_{i_tj_tk}a_{i_s}b_{j_s}
\left| \baray{c}
(i_s,\,j_s,\,k), (i_t,\,j_t,\,k) \in \Omega_k, \\
s < t, \, \, k = 1, \ldots,  n_3
\earay \right.
\right\} .
\ee
There are totally $\sum_{k=1}^{n_3} \binom{m_k}{2}$
polynomials in $\Phi$, where $m_k = | \Omega_k |$.
Then the optimization problem \reff{cpo-s}
can be rewritten as
\begin{equation} \label{pop:f(x)}
\left\{
\baray{cl}
\min  & f(x)  \\
\st  &  \phi(x) = 0 \quad
\text{for\, all}\,\,\, \phi \in \Phi.
\earay
\right.
\end{equation}

For $\ell =1,2,\ldots$, the $\ell$th
order moment relaxation for solving \eqref{pop:f(x)} is
\be  \label{dual-pro}
\left\{ \baray{cl}
     \min   & \langle  f, y \rangle  \\
 \st  &  \mathcal{V}_{\phi }^{(2\ell)} [ y ]=0
\quad \text{for\, all}\,\,\, \phi \in \Phi,  \\
\quad & M_\ell [ y ]\succeq 0,\\
\quad & y_0=1,\, y\in\mathbb{R}^{\mathbb{N}_{2\ell}^{\bar{n} } }.
\earay \right.
\ee
We refer to Subsection~\ref{ssc:momloc}
for the notation $\mathcal{V}_{p}^{2\ell}[y]$ and $M_\ell[ y ]$.
Note that the vector $y$ is indexed by monomial powers
$\af \in \mathbb{N}_{2\ell}^{ \bar{n} }$ and
$\mathcal{V}_{p}^{2\ell}[y], M_\ell[y]$ are linear in $y$.
The moment relaxation \eqref{dual-pro}
can be implemented in the software {\tt GloptiPoly~3} \cite{GloPol3}.

Suppose $y^*$ is an optimizer of $\eqref{dual-pro}$.
To extract a minimizer for \eqref{cpo-s},
%one could consider the \textit{flat truncation} \cite{mpo}:
one could consider the rank-$1$ condition:
there exists an integer $t \in  [1,\, \ell ]$ such that
\be   \label{eq-flat-truncation}
% \rank \, M_{t-1} [ y^* ]  \, = \, \rank \, M_t [  y^* ].
\rank \, M_t [y^*] \, = \, 1.
\ee
Note that the entries of $y^*$ are indexed by $\af \in \N^{\bar{n}}_{2\ell}$.
When \reff{eq-flat-truncation} holds, the vector
\[
x^* \, = \, (y^*_{e_1}, \ldots, y^*_{e_{\bar{n}}} )
\]
is a minimizer of \reff{pop:f(x)}.
We refer to \cite[Sec.~4.2]{mpo} for this fact.

\begin{theorem} \label{thm:nsym:cvg}
For the moment relaxation \eqref{dual-pro}, we have:
\begin{enumerate}

\item [(i)]
The partially given tensor $\mathcal{A}$ has no rank-$1$ completion
if and only if
the moment relaxation $\eqref{dual-pro}$ is infeasible for some order $\ell$.

\item [(ii)]
Suppose $V_{\re}(\Phi)$ is a nonempty finite set
or a nonempty compact smooth\footnote{
Here, $V_{\re}(\Phi)$ is said to be smooth if there exists a finite set
$P \subseteq \re[x]$ such that $\ideal{P} = \ideal{\Phi}$ and the gradient vector set
$\{ \nabla p(v) \}_{p \in P}$ is linearly independent for every $v \in V_{\re}(\Phi)$.}
variety. If the objective $f$ is generic\footnote{
This means that the coefficient vector of $f$ is chosen from
a Zariski open set in the embedding space.
}, then the moment relaxation \eqref{dual-pro} has minimizers
and each minimizer $y^*$ must satisfy the rank condition \eqref{eq-flat-truncation},
when $\ell$ is big enough.

\end{enumerate}
\end{theorem}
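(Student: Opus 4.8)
The plan is to treat the two parts separately: Part (i) is a feasibility statement that reduces to a Positivstellensatz-plus-duality argument, while Part (ii) is an application of the finite-convergence theory for the Moment-SOS hierarchy, sharpened to rank one by the uniqueness of the minimizer. Throughout I use that, by the equivalence of \reff{eq-rank-1} and \reff{eq-poly} and the reduction of Section~\ref{sec:tc-matrix}, the tensor $\mA$ has a rank-$1$ completion if and only if the feasible set $V_{\re}(\Phi)$ of \reff{pop:f(x)} is nonempty (the normalization $a_{\hat i}=b_{\hat j}=1$ being absorbed into the variable $x$).

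For Part (i), I would argue both directions through $V_{\re}(\Phi)$. For the ``if'' direction I prove the contrapositive: if some $u \in V_{\re}(\Phi)$, then for every order $\ell$ the moment vector $y=[u]_{2\ell}$ of the Dirac measure $\delta_u$ is feasible for \reff{dual-pro}, since $\mathcal{V}_{\phi}^{(2\ell)}[y]=0$ follows from $\phi(u)=0$, while $M_\ell[y]=[u]_\ell[u]_\ell^T\succeq 0$ and $y_0=1$. Hence \reff{dual-pro} is feasible at every order, so its infeasibility at some $\ell$ forces $V_{\re}(\Phi)=\emptyset$, i.e. no completion. For the ``only if'' direction, suppose $\mA$ has no rank-$1$ completion, so $V_{\re}(\Phi)=\emptyset$. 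By the real Nullstellensatz there is an identity $-1=p+\sigma$ with $p\in\ideal{\Phi}$ and $\sigma\in\Sigma[x]$; truncating degrees yields $-1\in\ideal{\Phi}_{2\ell}+\Sigma[x]_{2\ell}$ for some finite $\ell$. Pairing this identity with any putative feasible $y$ of \reff{dual-pro} at order $\ell$ gives $-1=\langle -1,y\rangle=\langle p,y\rangle+\langle\sigma,y\rangle=0+\langle\sigma,y\rangle\ge 0$, a contradiction, where $\langle p,y\rangle=0$ comes from the localizing constraints and $\langle\sigma,y\rangle\ge 0$ from $M_\ell[y]\succeq 0$. Thus \reff{dual-pro} is infeasible at order $\ell$.

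For Part (ii), the strategy is to invoke finite convergence of the Moment-SOS hierarchy and then use uniqueness to upgrade the flat-truncation/rank condition to rank exactly one. Since $V_{\re}(\Phi)$ is nonempty and either finite or compact, the feasible set of \reff{pop:f(x)} is nonempty and the minimum is attained; genericity of $f$ then guarantees a unique minimizer $x^*$ (cf. \cite[Section~6.3]{HaPham}). I would next verify the optimality conditions that drive finite convergence: in the compact smooth case, the smoothness hypothesis (linear independence of $\{\nabla p(v)\}_{p\in P}$) is precisely the constraint qualification, and genericity of $f$ supplies strict complementarity and the second-order sufficient condition at $x^*$; in the finite case one argues directly from the zero-dimensional real variety $V_{\re}(\Phi)$. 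Under these hypotheses the hierarchy for \reff{pop:f(x)} has finite convergence, and for all large $\ell$ every minimizer $y^*$ of \reff{dual-pro} admits a flat truncation, hence a representing measure supported on the set of minimizers of \reff{pop:f(x)} (see \cite[Chapters~5--6]{mpo}).

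Uniqueness then closes the argument: a representing measure of $y^*$ is supported on the minimizer set $\{x^*\}$, so it must equal $\delta_{x^*}$ and $y^*=[x^*]_{2\ell}$. Consequently $M_t[y^*]=[x^*]_t[x^*]_t^T$ has rank $1$ for every $t$, so the rank condition \reff{eq-flat-truncation} holds (e.g. with $t=1$) and the extracted $x^*$, hence $(a^*,b^*)$, is recovered. I expect the main obstacle to be the finite-convergence step in the compact smooth case: one must check carefully that genericity of the coercive quadratic $f$ forces the optimality conditions to hold at the unique minimizer, so that the flat-truncation theorem applies to \emph{every} moment minimizer rather than merely certifying the optimal value. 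The finite-variety case should be comparatively routine once zero-dimensionality of $V_{\re}(\Phi)$ is exploited.
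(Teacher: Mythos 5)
Your proposal is correct and takes essentially the same route as the paper: part (i) rests on the Real Nullstellensatz certificate $-1\in\ideal{\Phi}_{2\ell}+\Sigma[x]_{2\ell}$ (you merely inline the weak-duality step by pairing the certificate with a putative feasible $y$, where the paper invokes the SOS dual and weak duality), and part (ii) uses finite convergence of the Moment-SOS hierarchy (zero-dimensionality in the finite case; LICQ from smoothness, second-order sufficiency from genericity, and the Archimedean property from compactness in the smooth case), flat truncation for every minimizer $y^*$, and uniqueness of the minimizer of \eqref{pop:f(x)} to force $\rank\, M_t[y^*]=1$. One small overstatement, namely that $y^*$ equals the full moment vector of $\delta_{x^*}$ up to degree $2\ell$, is not implied (flat truncation only identifies $y^*$ up to degree $2t$), but this is harmless since \eqref{eq-flat-truncation} only requires rank one of some truncated moment matrix.
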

\begin{proof}
(i) If the moment relaxation \eqref{dual-pro} is infeasible for some order $\ell$,
then the feasible set of \eqref{cpo-s} must be empty.
This is because  \eqref{dual-pro} is a relaxation of \eqref{cpo-s}.
If the problem $\eqref{cpo-s}$ is infeasible,
then we have $-1  \in \ideal{\Phi} + \Sigma[x]$,
by Real Nullstellensatz
(see \cite[Theorem~2.6.3]{mpo}). So, it must hold that
\be \label{-1:sos}
-1  \, \in \, \ideal{\Phi}_{2\ell} + \Sigma[x]_{2\ell},
\ee
when $\ell$ is big enough.
The dual optimization of \eqref{dual-pro} is the SOS relaxation
\begin{equation}   \label{max}
\left\{
\baray{cl}
\max  & \gamma \\
\st  & f - \gamma  \in  \ideal{\Phi}_{2\ell}
      +  \Sigma[x]_{2\ell}.
\earay \right.
\end{equation}
The condition \reff{-1:sos} implies that the dual maximization problem
\reff{max} is unbounded above.
By weak duality, the moment relaxation \eqref{dual-pro}
must be infeasible.

(ii) First, consider the case that
the variety $V_{\re}(\Phi)$ is a nonempty finite set.
It is shown in \cite[Theorem~5.6.1]{mpo}
(also see \cite{Lau09} or \cite{317-po-r}) that
the moment relaxation \eqref{dual-pro} has minimizers
and each minimizer $y^*$ must satisfy the rank condition
\be \label{FT:nsy}
 \rank \, M_{t-1}  [ y^* ]  \,  =  \, \rank \, M_t [ y^* ]  ,
\ee
when $\ell$ big enough. Moreover, there are
$\rank \, M_t [y^*]$ minimizers for \reff{pop:f(x)},
which are contained in the support of the representing measure for the subvector
\[
y^*\vert_{2t} \, \coloneqq \, (y^*_\af)_{\af \in \N^{\bar{n}}_{2t} } .
\]
When coefficients of a polynomial are generically chosen
(i.e., the vector of its coefficients is chosen from a Zariski open set),
its optimizer is unique (see \cite[Section~6.3]{HaPham}).
Since the objective $f$ is generic,
the optimization problem \eqref{cpo-s} has a unique minimizer,
so the rank condition \reff{eq-flat-truncation} holds.

Second, consider the case that $V_{\re}(\Phi) \ne \emptyset$ is a smooth variety.
When $f$ is generic, \eqref{pop:f(x)} has a unique minimizer $u$.
Since $V_{\re}(\Phi)$ is smooth, there exists a finite set
$P \subseteq \re[x]$ such that $\ideal{P} = \ideal{\Phi}$ and the gradient vector set
$\{ \nabla p(v) \}_{p \in P}$ is linearly independent for every $v \in V_{\re}(\Phi)$.
So, the constraint in \eqref{pop:f(x)} is equivalent to that
$p(x) = 0$ for every $p \in P$. This means
the classical linear independence qualification condition holds at $u$
(see \cite[Section~5.1]{mpo}).
Since $f$ is generic, the second order sufficiency condition also holds at $u$
(see \cite[Section~5.5]{mpo}).
Since $V_{\re}(\Phi)$ is compact,
$\ideal{\Phi} + \Sigma[x]$ is archimedean.
Therefore, the moment hierarchy of semidefinite programming relaxations \eqref{dual-pro}
has finite convergence. It is shown in \cite[Theorem~5.4.2]{mpo} that
when $\ell$ is big enough,
the moment relaxation \eqref{dual-pro} has minimizers
and each minimizer $y^*$ must satisfy the rank condition \reff{FT:nsy}.
Moreover, there are $\rank \, M_t [y^*]$ minimizers for \eqref{pop:f(x)},
which are contained in the support of the representing measure for $y^*\vert_{2t}$.
Since $u$ is the unique minimizer,
the rank condition \eqref{eq-flat-truncation} holds.
\end{proof}

%%%%%%%%%%%%%%%%%%%%%%%%%%%%%%%%%%%%%%%%%%%%%%%%%%%%%%%%%%%%
\section{Symmetric rank-1 tensor completions}
\label{sc:sym}

We discuss rank-$1$ tensor completions for symmetric tensors.
Note that a symmetric rank-$1$ tensor $\mA$ can be written as $a^{\otimes 3}$.
Therefore, we can set $a=b=c$ in the previous sections.
In \eqref{eq-rank-1}, the variable $b$ can be replaced by $a$.
Consequently, the nuclear norm relaxation and moment relaxation
have smaller sizes for the matrix variables.
Assume the dimension \[ n_1 = n_2 = n_3 = n . \]
For a partially given symmetric tensor $\mA  \in \mt{S}^3(\re^{n})$,
we still let $\Omega$ denote the set of $(i,j,k)$
such that the value of $\mA_{ijk}$ is given.
Since $\mA$ is symmetric, we can assume $\Omega$ is invariant under
permutations, i.e., for each $(i,j,k) \in \Omega$,
every permutation of $(i,j,k)$ also belongs to $\Omega$.
The index set $\Omega$ can still be decomposed as in \eqref{eq-omega}.

The cubic symmetric tensor $\mA$ is of rank-$1$ if and only if
$\mA =  a^{\otimes 3}$, for some vector $0 \ne a \in \re^{n}$.
Let $(\hat{i}, \hat{j}, \hat{k})$ be the index as in \eqref{eq-label}.
Then, we can write $a$ as
\be   \label{label-v}
 a =\sqrt[3]{\tau }v,  \quad   v_{\hat{i}} v_{\hat{j}} = 1, \quad
 \tau \in \re, \,\,  v \in \re^{n}.
\ee
Therefore, we can look for $(v,\tau)$ satisfying the system:
\be   \label{eq-rank-1-sy}
\left\{ \baray{l}
\mathcal{A}_{i_sj_sk}v_{i_t}v_{j_t}-\mathcal{A}_{i_tj_tk}v_{i_s}v_{j_s}=0  \\
  \qquad \text{for} \quad  (i_s,\,j_s,k),  (i_t,\,j_t,k) \in \Omega_k,\, k = 1, \ldots, n, \\
\mathcal{A}_{ijk}=\tau v_iv_jv_k  \quad \text{for} \,\,\,  (i,\,j,\,k)\in \Omega, \\
v_{\hat{i}} v_{\hat{j}}  =  1,\,\, \tau\in\mathbb{R} .
\earay
\right.
\ee
Let $V=vv^T$ be the symmetric matrix variable,
then \eqref{eq-rank-1-sy} is equivalent to
\begin{equation}\label{eq-equivalence-sy}
\left\{\baray{cl}
\text{ find } & V = vv^T \in\mathbb{R}^{n \times n} \\
\st      & \mathcal{A}_{i_sj_sk}V_{i_tj_t}-\mathcal{A}_{i_tj_tk}V_{i_sj_s}=0  \\
      &  \qquad \text{for} \quad  (i_s,\,j_s,k),  (i_t,\,j_t,k) \in \Omega_k,\, k = 1, \ldots, n, \\
      & V_{\hat{i}\hat{j}}=1,\,\, V\in \mathcal{S}_{+}^{n}.
\earay \right.
\end{equation}
In the above, $\mathcal{S}_{+}^{n}$ denotes the cone of all
$n$-by-$n$ real symmetric positive semidefinite matrices.

When $\mathcal{A}$ is strongly rank-$1$ completable,
\reff{eq-equivalence-sy} can be reduced to the rank-$1$ matrix completion problem:
\begin{equation}  \label{eq-mc-sy}
\left\{\baray{cl}
\text{find} & V \in\mathbb{R}^{n  \times  n} \\
\st  & \rank\, V = 1, \\
 & V_{ij} = w_{ij} \quad \text{for} \,\,\, (i,\,j)\in \widetilde{\Omega}, \\
  & V \in \mathcal{S}_{+}^{n},
\earay  \right.
\end{equation}
where the values $w_{ij}$ are given in \reff{eq:wij}.
When the bipartite graph $G(V_1,\,V_2,\,\widetilde{\Omega})$ is connected,
the rank-$1$ matrix $V=vv^T$ can be similarly found
by the iterative formula \reff{ail:bjl:ratio}.

When $\mathcal{A}$ is not strongly rank-$1$ completable,
we can look for a rank-$1$ matrix solution for \eqref{eq-equivalence-sy}
by solving the nuclear norm relaxation.
Since $V$ is symmetric positive semidefinite,
the nuclear norm $\| V \|_*  = \mbox{Trace}(V)$.
So, the nuclear norm relaxation of \reff{eq-equivalence-sy} is
\begin{equation}\label{eq-sy}
\left\{\baray{cl}
\min  &  \mbox{Trace}(V)  \\
\st  & \mA_{i_s j_s k}V_{i_t j_t}-\mathcal{A}_{i_t j_t k}V_{i_s j_s}=0  \\
        &  \qquad \text{for} \quad (i_s,\,j_s,k),  (i_t,\,j_t,k) \in \Omega_k,\, k = 1, \ldots, n, \\
       & V_{\hat{i}\hat{j}}=1, \,\,  V\in \mathcal{S}_{+}^{n} .
\earay \right.
\end{equation}
The nuclear norm relaxation~\reff{eq-sy} may or may not produce
a rank-$1$ matrix completion (see Example~\ref{ex-com-nuc-sy}).

When \reff{eq-sy}  fails to return a rank-$1$ matrix,
we can find a symmetric rank-$1$ tensor completion by solving moment relaxations.
Note that \reff{eq-rank-1-sy} can be equivalently formulated as
\be   \label{eq-poly-sy}
\left\{\baray{l}
\mathcal{A}_{i_s j_s k}v_{i_t}v_{j_t} -
    \mathcal{A}_{i_t j_t k}v_{i_s}v_{j_s}=0 \\
\qquad \text{for} \quad (i_s,j_s, k), (i_t,j_t, k) \in \Omega_k, \,  k = 1, \ldots,  n, \\
    v_{\hat{i}}=1  .
\earay  \right.
\ee
To get a feasible point $v$ for \reff{eq-poly-sy},
we select a quadratic objective
$f(v) = v^T F v.$
A feasible solution for \eqref{eq-poly-sy}
can be found by solving
\begin{equation}\label{eq-cpo-sy}
\left\{\baray{cl}
 \min  &  f(v)  \\
 \st &  \mathcal{A}_{i_s j_s k}v_{i_t}v_{j_t}-\mathcal{A}_{i_t j_t k}v_{i_s}v_{j_s}=0 \\
    &  \qquad \text{for} \quad  (i_s,\,j_s,\,k), (i_t,\,j_t,\,k) \in \Omega_k, \,  k = 1, \ldots,  n. \\
    &   v_{ \hat{i} } =  1.
\earay \right.
\end{equation}
When $F$ is a generic positive definite matrix,
the optimization problem \reff{eq-cpo-sy} must have a minimizer
and the minimizer is unique, if \reff{eq-poly-sy} is feasible.
For cleanness of the paper, we omit the details and refer to Section~\ref{sec:tc-mom}.

%%%%%%%%%%%%%%%%%%%%%%%%%%%%%%%%%%%%%%%%%%%%%%%%%%%%%%%%%%%%%%%%%%%%%%%%%%%%%%
\section{Extension to higher order tensors}
\label{sc:highord}

The previous sections focus on rank-$1$ completions for cubic tensors.
However, the proposed methods can be naturally extended to higher order tensors.
A tensor $\mA \in  \re^{n_1\times\cdots \times n_d}$
can be reshaped as a tensor of order $3$.
For instance, when $d=4$, one can reshape $\mA$ as the cubic tensor
$\mB  \in \re^{n_1\times n_2\times (n_3 n_4) }$ such that
\[
\mA_{ijkl}  \, =  \,  \mB_{ij \ell}
\quad \text{for} \quad \ell = (k-1)n_4 + l.
\]
The resulting $\mB$ is a partially given cubic tensor.
We can apply our earlier proposed methods to get a rank-$1$ completion
\[
\mB \,=\, a\otimes b\otimes \hat{c}
\in \mathbb{R}^{n_1\times n_2\times (n_3 n_4)} .
\]
If $\hat{c}$ can also be reshaped as a rank-$1$ matrix, say, $cd^T \in \re^{n_3 \times n_4}$,
then the above produces a rank-$1$ tensor completion
$\mA = a \otimes b \otimes c \otimes d$.
However, if $\hat{c}$ cannot be reshaped as a rank-$1$ matrix,
then this does not give a rank-$1$ completion for $\mA$.

\begin{example}
(i) \label{ex-order-4.2}
Consider the tensor $\mA \in\mathbb{R}^{2\times 2\times 2\times 2}$
with given entries:
\[
\begin{array}{lll}
 \mA_{2211}=12, & \mA_{1121}=4, & \mA_{2121}=8, \\
 \mA_{2221}=4, & \mA_{1112}=6, & \mA_{2222}=2 . \\
 \end{array}
\]
It can be reshaped as the cubic tensor $\mB\in\mathbb{R}^{2\times 2\times 4}$ such that
\[
\begin{array}{lll}
 \mB_{221}=12, & \mB_{112}=4, & \mB_{212}=8, \\
 \mB_{222}=4, & \mB_{113}=6, & \mB_{224}=2 . \\
 \end{array}
\]
By solving the moment relaxation \reff{dual-pro}, we get
$\mB  =  a^*\otimes b^*\otimes \hat{c}$, with
\[
\begin{array}{lcl}
   a^*  = (\frac{1}{2} ,\,1 ), \quad b^*\,= \,(2  ,\,1 ), \quad
\hat{c} = (12 ,\,4 ,\,6  ,\,2  ).
\end{array}
\]
The vector $\hat{c}$ can be reshaped to the rank-$1$ matrix $c^* {d^*}^T$ with
\[
c^*=(12,\,4),\quad d^*=(1,\, \frac{1}{2}) .
\]
Therefore, we get the rank-$1$ completion
$\mA = a^*\otimes b^*\otimes c^*\otimes d^*$.\\
%%%%%%%%%%%%%%%%%%%%%%%%%%%%%%%%%%%%%%%%%%%%%%%%%%%%%%%%%%%%%%%%%%%%%
(ii) \label{ex-order-4.1}
Consider the tensor $\mA \in \re^{3\times 3\times 3\times 3}$
with given entries:
\[
\begin{array}{lllll}
 \mA_{1311}=6, & \mA_{3111}=24, & \mA_{3211}=12, & \mA_{3311}=12, & \mA_{2321}=4,  \\
 \mA_{2231}=8, & \mA_{3231}=8, & \mA_{3331}=8, & \mA_{2122}=4 , & \mA_{2322}=2,  \\
 \mA_{3122}=4, & \mA_{2313}=18, & \mA_{1223}=3, & \mA_{2233}=12 , & \mA_{2333}=12.  \\
 \end{array}
\]
It can be reshaped as the cubic tensor $\mB \in \mathbb{R}^{3\times 3\times 9}$ such that
\[
\begin{array}{lllll}
 \mB_{131}=6, & \mB_{311}=24, & \mB_{321}=12, & \mB_{331}=12, & \mB_{232}=4,\\
 \mB_{223}=8, & \mB_{323}=8, & \mB_{333}=8, & \mB_{215}=4, & \mB_{235}=2,  \\
 \mB_{315}=4, & \mB_{237}=18, & \mB_{128}=3, & \mB_{229}=12, & \mB_{239}=12.  \\
 \end{array}
\]
By solving the moment relaxation \reff{dual-pro}, we get
$\mB  =  a^*\otimes b^*\otimes \hat{c}$, with
\[
 a^* = (1,\,2,\,2),\quad b^* \,=\, (2,\,1,\,1), \quad
 \hat{c} = (6,\,2,\,4,\,0,\,1,\,0,\,9,\,3,\,6).
\]
There are no equations for $c_4$ and $c_6$, so we give zero values for them.
The vector $\hat{c}$ cannot be reshaped to a rank-$1$ matrix,
so the above does not produce a rank-$1$ tensor completion.
However, we remark that this tensor $\mA$ is rank-$1$ completable,
e.g., $\mA = a\otimes b\otimes c\otimes d$ with
\[
 a=(1,\,2,\,2),\quad b=(2,\,1,\,1), \quad
 c=(3,\,1,\,2),\quad d=(2,\,1,\,3).
\]
\end{example}

%%%%%%%%%%%%%%%%%%%%%%%%%%%%%%%%%%%%%%%%%%%%%%%%%%%%%%%%%%%%%%%%%%%%%%
\section{Numerical experiments}
\label{sc:num}

In this section, we present numerical experiments for
getting rank-$1$ tensor completions by our proposed methods.
The computations are implemented in MATLAB
R2022b, on a desktop PC with CPU @2.10GHz and RAM 16G.
The numerical examples are solved by the
software Gloptipoly~3 \cite{GloPol3} and SeDuMi \cite{sturm1999}.
All computational results are displayed in four decimal digits,
for cleanness of the paper.
%
%Let
%\[
%err\, \coloneqq \left\|\mathcal{A}-a^*\otimes b^*\otimes c^* \right\|_{\Omega},
%\]
%and the density is indicated as ``den''.
%

First, we remark that a rank-$1$ tensor completion
$\mA = a \otimes b \otimes c$ can be obtained by solving
the unconstrained polynomial optimization:
\be  \label{tc}
\min_{a,b,c} \,\, \sum_{(i,j,k)\in\Omega}(\mA_{ijk}-a_ib_jc_k)^2.
\ee
However, solving \reff{tc} is much more expensive than solving \eqref{cpo-s}
by using Moment-SOS relaxations.
This is because \reff{tc} has three vector variables $a,b,c$
and the polynomial has degree six.

\begin{example} \label{nls:sos}
We compare the numerical performance of solving \reff{tc} and \eqref{cpo-s}
by Moment-SOS relaxations, to get rank-$1$ tensor completions.
We randomly generate partially given rank-$1$ tensors $\mA$,
for which 20\% of its entries are given.
\begin{table}[ht!]
\centering
\caption{Computational performance of \reff{tc} and \eqref{cpo-s}. }
\label{ta-un}
\begin{tabular}{|c|c|c|c|}
\hline
 $n_1 = n_2 = n_3$     &   2   &     3   &    4      \\ \hline
time (s) for \reff{tc}     &  0.42 & 23.89   &  2945.13  \\ \hline
time (s) for \reff{cpo-s}  &  0.03 & 0.27    &   0.48    \\ \hline
\end{tabular}
\end{table}
The computational time (in seconds) for solving them by moment relaxations
are shown in Table~\ref{ta-un}. It is clear to see that
solving \eqref{cpo-s} is much more efficient than solving \reff{nls:sos}.
\end{example}

%%%%%%%%%%%%%%%%%%%%%%%%%%%%%%%%%%%%%%%%%%%%%%%%%%%%%%%%%%%%
%%%%%%%%%%%%%%%%%%%%%%%%%%%%%%%%%%%%%%%%%%%%%%%%%%%%%%%%%%%%
\subsection{Performance of nuclear norm relaxations}
\label{ssc:num:nuclear}

In this subsection, we explore the performance of nuclear norm relaxations
for getting rank-$1$ tensor completions.
%%for both nonsymmetric and symmetric tensors.

\begin{example}
Let $n_1=n_2=n_3 = n$. We randomly generate rank-$1$ tensors
$\mathcal{A}\in \mathbb{R}^{n\times n\times n}$ and
randomly select the index set $\Omega$ of known entries.
The density of known tensor entries is measured as
\[ den \, = \, |\Omega| / n^3 . \]
According to \cite[Theorem~1.1]{Breiding23},
when the number of given tensor entries is at least $3n-1$ (i.e., $| \Omega | \ge 3n-1$),
the rank-$1$ tensor completion problem is expected to be identifiable
(i.e., the completion is unique when $\mA$ is a generic rank-$1$ tensor).
So we also record the oversampling rate
\[
 \rho \, = \, |\Omega| / (3n-1) .
\]
For each dimension $n$, we generate $20$ random instances.
The success rate is measured as the percentage of successful instances,
for which a rank-$1$ tensor completion is obtained by
solving the nuclear norm relaxation \reff{eq-matrec-3dim}.
We report the minimum density, for which the success rate
is higher than or equal to $90\%$.
The computational results are shown in Table~\ref{ta-nuc-nonsy}.
The computational time (in seconds) is reported as the average time
for solving \reff{eq-matrec-3dim} by using the software {\tt  SeDuMi}.
\begin{table}[htb]
\centering
\caption{The minimum density for the success rate of the
nuclear norm relaxation \reff{eq-matrec-3dim} to be at least $90\%$.}
\label{ta-nuc-nonsy}
\begin{tabular}{| c | c | c| c || c | c | c |c |   }
   \hline
  $n$ & $den$ & $\rho$ & time (s) &  $n$  & $den$ & $\rho$ & time (s)    \\		
  \hline
    3 & 62\%  & 2.10 & 0.01    &  12   & 15\%  &7.41  & 4.48  \\ \hline
    4 & 41\% & 2.39 & 0.01    &  13   & 14\%   & 8.09 & 8.79  \\ \hline
    5 & 37\% & 3.30 & 0.03    &  14   & 11\%   & 7.36 & 9.84  \\ \hline
    6 & 30\% & 3.81 & 0.04    &  15   & 10\%  & 7.67  & 15.85  \\ \hline
    7 & 25\% & 4.29 & 0.07    &  16   & 9\%  & 7.84  & 27.16  \\ \hline
    8 & 24\% & 5.34 & 0.25    &  17   & 9\%  & 8.84  & 48.52  \\ \hline
    9 & 19\% & 5.33 & 0.43    &  18   & 8\%  & 8.80  & 45.54  \\ \hline
   10 & 18\% & 6.21 & 1.02    &  19   & 8\% & 9.80   & 75.76  \\  \hline
   11 & 16\% & 6.66 & 1.79    &  20   & 8\%  & 10.85  & 164.62 \\ \hline
\end{tabular}
\end{table}
\end{example}

\begin{example}\label{den-sy-per}
Let $n=n_1=n_2=n_3$. We randomly generate rank-$1$ symmetric tensors
$\mathcal{A}\in \mathbb{R}^{n\times n\times n}$.
The index set $\Omega$ of known tensor entries is
generated randomly such that it is invariant under permutations, since $\mA$ is symmetric.
The density of known tensor entries is still measured as
$den = |\Omega| / n^3$. For each dimension $n$,
we also generate $20$ random instances.
The success rate is still measured as the percentage of successful instances,
for which a rank-$1$ tensor completion is obtained by
solving the symmetric nuclear norm minimization \reff{eq-sy}.
We report the minimum density for which the success rate
is at least $90\%$.
The computational results are shown in Table~\ref{ta-nuc-sy}.
Comparing Table~\ref{ta-nuc-nonsy} and Table~\ref{ta-nuc-sy},
one can see that solving \reff{eq-sy} is more efficient than solving
\reff{eq-matrec-3dim} for tensors of the same dimension.

\begin{table}[htb]
\centering
\caption{The minimum density for the success rate of
the symmetric nuclear norm minimization \reff{eq-sy}
to be at least $90\%$.}
\label{ta-nuc-sy}
\begin{tabular}{| c | c | c | |c | c | c | }
    \hline
   $n$ & $den$ & time (s) & $n$  & $den$ & time (s)    \\	
 \hline
   5   & 42\%  & 0.01      & 30    & 5\%   & 0.26 \\	
   \hline
  10   & 19\%  & 0.02      & 35    & 3\%   & 1.69 \\
  \hline
  15   & 13\%  & 0.07      & 40    & 2\%   & 2.01 \\
  \hline
  20   & 7\% & 0.13       & 45    & 2\%   & 6.53\\
  \hline
  25   & 5\% & 0.25        & 50   & 2\%   & 8.06 \\
 \hline
\end{tabular}
\end{table}
\end{example}

%%%%%%%%%%%%%%%%%%%%%%%%%%%%%%%%%%%%%%%%%%%%%%%%%%%%%%%%%%%%%%%%%%
\subsection{Performance for strongly rank-1 completable tensors}

In this subsection, we give numerical experiments
for strongly rank-$1$ completable tensors. For such cases,
the problem can be reduced to matrix completion.
When the bipartite graph $G(V_1,\,V_2,\,\widetilde{\Omega})$ is connected,
the matrix completion problem~\reff{eq-matrix-com}
can be solved by the iterative formula \reff{ail:bjl:ratio}.
Comparing Table~\ref{ta-nuc-nonsy} and Table~\ref{ta-matrix-nonsy-2},
we can see that doing this is much faster
than solving the nuclear norm relaxation~\eqref{eq-matrec-3dim}
or moment relaxation~\reff{dual-pro}.

\begin{example}
Let $n_1=n_2=n_3 = n$.
We randomly generate partially given
rank-$1$ tensors $\mA \in \re^{n \times n \times n}$
and randomly select the index set $\Omega$ such that $\mathcal{A}$
is strongly rank-1 completable, in the following way.
First, select $i_1 \in [n]$, $j_1 \in [n]$ randomly
and initialize $\widetilde{\Omega} \coloneqq \{ (i_1, j_1) \}$.
Second, we randomly select $i_2 \in [n] \setminus \{i_1\}$
and $j_2 \in [n] \setminus \{j_1\}$, then update
$
\widetilde{\Omega} \coloneqq   \widetilde{\Omega}
\cup \{ (i_2,\,j_1), \, (i_1, j_2) \} .
$
We repeat doing this, until
$
\{i_1, \ldots, i_n \}  =  \{j_1, \ldots, j_n \}  =   [n] .
$
This gives two paths of connecting edges:
\[
\begin{gathered}
 a_{i_1} \rightarrow b_{j_2} \rightarrow a_{i_3}\rightarrow b_{j_4}\rightarrow
  \cdots \rightarrow a_{i_{n-1}} \rightarrow b_{j_{n}} ,  \\
 b_{j_1} \rightarrow a_{i_2} \rightarrow b_{j_3}\rightarrow a_{i_4}\rightarrow
  \cdots \rightarrow b_{j_{n-1}} \rightarrow a_{i_{n}} .
\end{gathered}
\]
%Then, we let
%\[
%\widetilde{\Omega} = \left\{(i_1,\, j_1),\, (i_2,\, j_1),\,(i_2,\, j_2),
%\cdots ,(i_n,\, j_n)\right\} .
%\]
The bipartite graph $G(V_1,\,V_2,\,\widetilde{\Omega})$
given by $\widetilde{\Omega}$ is connected.
After this is done, we generate $\Omega$ as follows.
For each $(i,j) \in \widetilde{\Omega}$, we randomly select $k \in [n]$
and let $(i,j, k) \in \Omega$.
Then, check the dimension of the solution subspace of $\eqref{eq-equa-matrix}$.
If it is bigger than one, we randomly select
a new triple $(i,j, k) \not\in \Omega$ with $(i,j)\in \widetilde{\Omega}$,
and let $\Omega \coloneqq \Omega \cup \{(i,j, k) \}$.
Repeat doing this, until the solution subspace of
$\eqref{eq-equa-matrix}$ has dimension one.
For the index set $\Omega$ generated this way,
the partially given tensor $\mathcal{A}$ is strongly rank-1 completable.
For all generated random instances, we successfully obtained a rank-$1$ tensor completion
by applying the iterative formula \reff{ail:bjl:ratio}.
The computational time (in seconds) is shown in Table~\ref{ta-matrix-nonsy-2}.
\begin{table}[htb]
\centering
\caption{Computational time for strongly rank-$1$ completable tensors
with the iterative formula \reff{ail:bjl:ratio}. }
\label{ta-matrix-nonsy-2}
\begin{tabular}{| c | c | c | c | c | c |   }    \hline
$n$         & 10    & 20    & 30    & 40    & 50    \\  \hline
time\,(s)   & 0.02  & 0.03  & 0.06  & 0.06  & 0.08  \\  \hline
$n$         & 60    & 70    & 80    & 90    & 100   \\	 \hline
time\,(s)   & 0.12  & 0.15  & 0.19  & 0.25  & 0.31  \\ \hline
\end{tabular}
\end{table}
\end{example}

%%%%%%%%%%%%%%%%%%%%%%%%%%%%%%%%%%%%%%%%%%%%%%%%%%%%%%%%%%%%%%
%%%%%%%%%%%%%%%%%%%%%%%%%%%%%%%%%%%%%%%%%%%%%%%%%%%%%%%%
\subsection{Performance of moment relaxations}
\label{ssc:num:mom}

For some partially given tensors, the nuclear norm relaxation may fail to give
a rank-$1$ tensor completion. However, by solving moment relaxations,
we can always get one, if it exists. This is shown in the following examples.

\begin{example}
Consider the tensor $\mA \in \re^{ 5\times 5\times 5}$ with given entries:
\[
\begin{matrix}
\mA_{111}=3, & \mA_{151}=1, & \mA_{241}=4, & \mA_{421}=1, & \mA_{451}=1,\\
\mA_{521}=1, & \mA_{522}=1, & \mA_{542}=2, & \mA_{234}=4, & \mA_{414}=6.\\
\end{matrix}
\]
For the nuclear norm relaxation~\reff{eq-matrec-3dim} for symmetric tensors,
the optimal matrix is
\[
X^*=\begin{bmatrix}
1.0000 & 0.2648  & 0.0625 & 0.5296  & 0.3333 \\
0.4724 & 0.6667  & 0.1575 & 1.3333  & 0.6667  \\
0.0000 & 0.0000  & 0.0000 & 0.0000  & 0.0000  \\
0.2362 & 0.3333  & 0.0787 & 0.6667  & 0.3333 \\
0.2362 & 0.3333  & 0.0787 & 0.6667  & 0.3333 \\
\end{bmatrix}.
\]
Since $\rank \,X^*=2 > 1$, it does not produce a rank-$1$ tensor completion.
However, by solving the moment relaxation \reff{dual-pro},
we can get the rank-$1$ tensor completion
$\mA  =  a^*\otimes b^*\otimes c^*$, with
\[
 a^* = (1,\, 2,\, 3,\, 1,\, 1), \quad
 b^* = (1,\,\frac{1}{3},\, \frac{1}{3},\, \frac{2}{3},\, \frac{1}{3}), \quad
 c^* = (3,\, 3,\,0,\, 6,\, 0) .
\]
There are no equations for $c_3$ and $c_5$, so we give zero values for them.
The computation took around $4.82$ seconds.
\end{example}

\begin{example}\label{ex-com-nuc-sy}
Consider the symmetric tensor $\mA \in \mt{S}^3( \re^5 )$ with given entries:
\[
\baray{llll}
 \mA_{151}=2,  & \mA_{221}=9,  & \mA_{541}=10, & \mA_{222}=27,\\
 \mA_{333}=64, & \mA_{353}=32, & \mA_{513}=8,  & \mA_{543}=40 ,\\
 \mA_{454}=50, & \mA_{115}=2,  & \mA_{235}=24. & \quad
\earay
\]
For the nuclear norm relaxation~\reff{eq-sy}, the optimal matrix is
\[
V^*=\begin{bmatrix}
      0.1961 & 0.2547 & 0.7845 & 0.9806 & 1.0000 \\
      0.2547 & 4.5000 & 2.3534 & 1.2739 & 1.2991 \\
      0.7845 & 2.3534 & 8.0000 & 3.9222 & 4.0000 \\
      0.9806 & 1.2739 & 3.9222 & 4.9028 & 5.0000 \\
      1.0000 & 1.2991 & 4.0000 & 5.0000 & 5.0991 \\
    \end{bmatrix}.
\]
Since $\rank \, V^* =3 > 1$, it does not produce a rank-$1$ completion.
However, by solving the moment relaxation,
we can get the symmetric rank-$1$ tensor completion
$\mA  = (v^*)^{\otimes 3}$, with
$v^*=(1,\, 3,\,4,\, 5,\, 2).$
The computation took around $0.14$ seconds.
\end{example}

%%%%%%%%%%%%%%%%%%%%%%%%%%%%%%%%%%%%%%%%%%%%%%
\section{Conclusions}
\label{sc:con}

This paper studies the rank-$1$ tensor completion problem for cubic tensors.
It aims at finding missing values of a partially given tensor
so that it is of rank-$1$.
We reformulate this problem equivalently as
a special rank-$1$ matrix recovery problem,
which looks for a rank-$1$ matrix
satisfying a set of linear equations.
We propose both nuclear norm relaxation and moment relaxation methods
for solving the resulting rank-$1$ matrix recovery problem.
The nuclear norm relaxation sometimes get a rank-$1$ tensor completion,
but sometimes it does not.
The moment relaxation always get a rank-$1$ tensor completion
or detect its nonexistence.
For computational comparison, the nuclear norm relaxation approach
solves relatively larger problems,
while the moment relaxation approach solves relatively smaller ones.
For the special class of strongly rank-$1$ completable tensors,
the problem can be reduced to a rank-$1$ matrix completion problem.
When the corresponding bipartite graph is connected,
a rank-$1$ tensor completion can be obtained
by applying an iterative formula.
For strongly rank-$1$ completable tensors,
much larger problems can be solved efficiently.
Numerical experiments are provided to demonstrate
the efficiency of these proposed methods.

\bibliographystyle{siamplain}
%\bibliography{references}

\end{document}